\documentclass[11pt,a4paper]{article}

\usepackage[a4paper,margin=2.5cm]{geometry}
\usepackage[T1]{fontenc}
\usepackage[utf8]{inputenc}
\usepackage{lmodern}
\usepackage{microtype}

\usepackage{amssymb,amsmath,amsfonts,latexsym}
\usepackage{amsthm}
\usepackage{mathtools}
\usepackage{bm}

\usepackage{graphicx}
\usepackage{subcaption}
\usepackage{float}
\usepackage{multirow,array}
\usepackage{xcolor}

\usepackage{enumitem}
\allowdisplaybreaks

\usepackage[colorlinks=true,
linkcolor=blue,
citecolor=blue,
urlcolor=blue,
bookmarksnumbered=true]{hyperref}

\newtheorem{theorem}{Theorem}[section]
\newtheorem{lemma}[theorem]{Lemma}
\newtheorem{proposition}[theorem]{Proposition}

\newtheorem{definition}[theorem]{Definition}
\newtheorem{example}[theorem]{Example}
\newtheorem{remark}[theorem]{Remark}

\title{The Stability and Density of Norm Attaining Operators on Reducing Subspaces}

\author{Zinan Su$^{1,*}$ \and Yuanhang Zhang$^{1}$ \\[1ex]
	\small $^1$School of Mathematics, Jilin University, Qianjin Street 2699,
	Changchun 130012, Jilin, China \\
	\small $^*$Corresponding author: \texttt{suzn1023@mails.jlu.edu.cn}}

\date{Received: ...... / Revised: ...... / Accepted: ......}

\begin{document}
	
	\maketitle
	
	\begin{abstract}
		For a bounded linear operator on an infinite-dimensional Hilbert space, we consider the class $\beta(H)$ of operators whose restrictions to all nonzero reducing subspaces attain their norms. We prove that $\beta(H)$ is dense in $\mathcal{B}(H)$ in the operator norm. Since $\beta(H)$ is not stable under arbitrary compact perturbations, we introduce a natural subclass $\beta_0(H)$ and investigate its perturbation properties. We show that $\beta_0(H)$ is stable under finite-rank perturbations and obtain a complete characterization of those compact perturbations that preserve membership in $\beta_0(H)$. We also determine the maximal compactly perturbation-invariant subset of $\beta_0(H)$. These results clarify the spectral structure and stability of norm-attaining operators on reducing subspaces.
	\end{abstract}
	
	\medskip
	\noindent\textbf{Keywords:} norm-attaining operator, reducing subspace, compact perturbation, spectral decomposition, positive operator, density
	
	\medskip
	\noindent\textbf{AMS Subject Classification (2020):} 47A55, 47A15, 47A58
	
	\tableofcontents
	
	\section{Introduction}\label{sec1}
	\setcounter{equation}{0}
	
	Let $H$ be an infinite-dimensional complex Hilbert space and let $\mathcal{B}(H)$ denote the algebra of all bounded linear operators on $H$. An operator $T\in \mathcal{B}(H)$ is said to be norm attaining if there exists a unit vector $x\in H$ such that $\|Tx\|=\|T\|$. This notion traces back to the classical study of norm attaining continuous linear functionals on Banach spaces. In 1961, Bishop and Phelps proved that the set of norm attaining functionals on any Banach space is dense in its dual space \cite{bishop1961}. Bollob\'as subsequently obtained further extensions of the Bishop--Phelps theorem \cite{Bollobas1970}. Lindenstrauss subsequently extended this idea to the operator setting: if the domain space is a reflexive Banach space, then norm attaining operators are dense in the space of all bounded linear operators with respect to the operator norm \cite{Lindenstrauss1963}.
	
	For positive operators on a Hilbert space, norm attainability is equivalent to the norm of the operator being an eigenvalue \cite[Proposition 2.4]{Carvajal2012}. Hence, the norm attaining property provides a powerful tool for locating the largest eigenvalue of an operator, which is of significance in the solution of operator equations and in min-max problems.
	
	Carvajal and Neves introduced in \cite{Carvajal2012} a class of operators with a stronger property than mere norm attainability --- the absolutely norm attaining operators (or $\mathcal{AN}$-operators for short): an operator $T$ is called an $\mathcal{AN}$-operator if, for every non-zero closed subspace $M$ of the domain space, the restriction $T|_M$ is norm attaining. Subsequently, Pandey and Paulsen \cite{Pandey2017} characterized positive $\mathcal{AN}$-operators by means of spectral methods, Ramesh \cite{Ramesh2018} studied paranormal $\mathcal{AN}$-operators, and Venku Naidu and Ramesh \cite{Naidu2019} discussed representations of self-adjoint and normal $\mathcal{AN}$-operators. The class of $\mathcal{AN}$-operators contains all compact operators and all partial isometries with finite-dimensional kernel, but it is not closed under the adjoint operation (see \cite{Ramesh2018}).
	
	It is worth noting that $\mathcal{AN}$-operators are required to be norm attaining when restricted to all closed subspaces --- a rather stringent condition. Ramesh and Osaka \cite{Ramesh2021,wuhu} relaxed this by only requiring norm attainability on reducing subspaces, defining the following class of operators:
	\[
	\beta(H):=\{T\in \mathcal{B}(H):\text{$T|_M$ is norm attaining for every non-zero }M\in\mathcal{R}_T\},
	\]
	where $\mathcal{R}_T$ denotes the collection of all reducing subspaces of $T$. Clearly, $\mathcal{AN}(H)\subseteq\beta(H)\subseteq\mathcal{N}(H)$, where $\mathcal{N}(H)$ denotes the class of norm attaining operators on $H$. It was shown in \cite{Ramesh2021} via explicit examples that both inclusions are strict, and a structure theorem for positive operators in $\beta(H)$ as well as a complete representation of general operators in this class were established.
	
	It is important to note that the equivalence $T\in\beta(H)\iff|T|\in\beta(H)$ stated in \cite[Proposition 3.5(i)]{Ramesh2021} is not correct in this generality. Only the implication
	\[
	|T|\in\beta(H)\implies T\in\beta(H)
	\]
	holds. The converse fails in general; an explicit counterexample is given in Example~\ref{ex:positive-fails} below. This observation motivates the introduction of the subclass
	\[
	\beta_0(H):=\{T\in\mathcal{B}(H):|T|\in\beta(H)\},
	\]
	which satisfies $\beta_0(H)\subseteq\beta(H)$, the inclusion being proper in general. The spectral perturbation theory developed in this paper is naturally formulated for $\beta_0(H)$ rather than for the larger class $\beta(H)$.
	
	A first natural question is how large the class $\beta(H)$ is inside $\mathcal{B}(H)$. Although the Lindenstrauss theorem assures the density of the norm attaining operators $\mathcal{N}(H)$, it does not yield the density of $\beta(H)$, since $\beta(H)$ is a proper subclass of $\mathcal{N}(H)$. We nevertheless prove in Section~8 that $\beta(H)$ is norm-dense in $\mathcal{B}(H)$: by the spectral theorem, every self-adjoint operator is a norm-limit of finite-spectrum self-adjoint operators, and each finite-spectrum self-adjoint operator belongs to $\beta(H)$. This density, however, coexists with a remarkable fragility, as the perturbation results of Sections~4--6 will show.
	
	The present paper builds upon \cite{Ramesh2021} and further investigates the behavior of the class $\beta_0(H)$ under compact perturbations. Since $\mathcal{K}(H)\subseteq\beta_0(H)$, it is natural to ask whether $\beta_0(H)$ is stable under compact perturbations. We show that this is not the case in general---a concrete example is provided in Section~4 (Example~\ref{ex:3})---and then proceed to determine exactly when invariance does hold.
	
	The central question is: given $T\in\beta_0(H)$ and a compact operator $K$, under what conditions does $T+K$ remain in $\beta_0(H)$? Perturbation problems of this kind are of fundamental importance in operator theory: compact operators, being norm-limits of finite-rank operators, represent the most natural ``small perturbation'', and characterizing the invariance (or lack thereof) of $\beta_0(H)$ under these perturbations delineates the stability boundary of this operator class.
	
	In Section~2, we define the basic terminology and notations, correct the modulus proposition, give a counterexample showing that the converse fails, and introduce the subclass $\beta_0(H)$. In Section~3 we examine the structure theorem stated in \cite[Theorem 4.6]{Ramesh2021}, explain the limitation of its $\mathbb{N}$-indexed formulation, and prove a corrected version in terms of reverse well-ordered point spectra for operators in $\beta_0(H)$; this corrected structure theorem supplies the four auxiliary quantities---the limit modulus $c$, the residual operator $D$, the interaction operator $R$, and the total perturbation operator $L$---used throughout the paper. In Section~4 we prove that $\beta_0(H)$ is invariant under finite-rank perturbations (Theorem~\ref{thm:finrank}) using the finite-rank spectral stability lemma; Example~\ref{ex:3} shows that the conclusion fails for general compact operators, even trace-class ones. In Section~5 we develop the general theory for compact perturbations: a full characterization in terms of the pure point spectrum and reverse well-ordered point spectrum of the total perturbation operator $L$ (Theorem~\ref{thm:general}), which simplifies, when the residual operator $D$ is compact, to the condition that $\sigma_p(L)\cap(-\infty,0)$ is finite (Theorem~\ref{thm:main}). Examples~\ref{ex:1} and \ref{ex:2} show that the two hypotheses in the main perturbation result are sharp. In Section~6 we characterize the maximal subset of $\beta_0(H)$ that survives all compact perturbations, proving that $\widehat{\beta_0}(H)=\mathcal{K}(H)$ (Theorem~\ref{thm:hatbeta}). In Section~7 we collect the conclusions that remain valid for the larger class $\beta(H)$ itself. In Section~8 we prove that $\beta(H)$ is dense in $\mathcal{B}(H)$ in the operator norm (Theorem~\ref{thm:density}).
	
	\section{Notations and terminology}\label{sec2}
	\setcounter{equation}{0}
	
	Throughout this paper, $H$ denotes an infinite-dimensional \emph{separable} complex Hilbert space with inner product $\langle\cdot,\cdot\rangle$ and induced norm $\|\cdot\|$. For a subspace $M$ of $H$, $S_M:=\{x\in M:\|x\|=1\}$ is the unit sphere of $M$, and $M^\perp$ denotes its orthogonal complement. If $M$ is closed, $P_M$ stands for the orthogonal projection onto $M$.
	
	Let $\mathcal{B}(H)$ be the algebra of all bounded linear operators on $H$. For $T\in \mathcal{B}(H)$, $N(T)$ and $R(T)$ denote its null space and range, respectively, and $T^*$ its adjoint. An operator $T\in \mathcal{B}(H)$ is called
	\begin{itemize}
		\item \emph{normal} if $T^*T=TT^*$,
		\item \emph{self-adjoint} if $T=T^*$,
		\item \emph{positive} (written $T\geqslant 0$) if $\langle Tx,x\rangle\geqslant 0$ for all $x\in H$.
	\end{itemize}
	Every positive operator admits a unique positive square root $T^{1/2}$.
	For self-adjoint $S,T\in \mathcal{B}(H)$, we write $S\geqslant T$ whenever
	$S-T\geqslant 0$. For a subset $A\subseteq \mathcal{B}(H)$, we denote by
	$A_+$ the set of positive elements of $A$.
	
	An operator $P\in \mathcal{B}(H)$ with $P^2=P$ is a \emph{projection}. If moreover $N(P)\perp R(P)$, then $P$ is an \emph{orthogonal projection}; this is equivalent to $P=P^*$ and also to $P$ being normal. Two orthogonal projections are called \emph{mutually orthogonal} if their ranges are orthogonal.
	
	An operator $V\in \mathcal{B}(H)$ is a \emph{partial isometry} if $V|_{N(V)^\perp}$ is an isometry, and \emph{unitary} if it is an isometry and onto. For any $T\in \mathcal{B}(H)$, the \emph{modulus} of $T$ is $|T|:=(T^*T)^{1/2}$. There exists a unique partial isometry $U\in \mathcal{B}(H)$ such that $T=U|T|$ and $N(U)=N(T)$; this is the \emph{polar decomposition} of $T$.
	
	A closed subspace $M$ of $H$ is \emph{invariant} under $T\in \mathcal{B}(H)$ if $TM\subseteq M$, and \emph{reducing} if both $M$ and $M^\perp$ are invariant under $T$. The collection of all reducing subspaces of $T$ is denoted by $\mathcal{R}_T$. The \emph{spectrum} and \emph{point spectrum} of $T$ are
	\begin{gather*}
		\sigma(T):=\{\lambda\in\mathbb{C}:T-\lambda I\text{ is not invertible in }\mathcal{B}(H)\},\\
		\sigma_p(T):=\{\lambda\in\mathbb{C}:T-\lambda I\text{ is not injective}\}.
	\end{gather*}
	
	For $S\in\mathcal{B}(H)$, its essential spectrum is denoted by $\sigma_{\mathrm{ess}}(S)$ and is defined by
	\[
	\sigma_{\mathrm{ess}}(S):=\{\lambda\in\mathbb{C}:S-\lambda I\text{ is not Fredholm}\}.
	\]
	For self-adjoint operators, the essential spectrum is invariant under compact perturbations; see, for example, Kato~\cite{Kato1995}.
	
	An operator $S\in\mathcal{B}(H)$ is said to have \emph{pure point spectrum} if the closed linear span of its eigenvectors is all of $H$. Equivalently,
	\[
	H=\overline{\operatorname{span}}\bigl\{\ker(S-\lambda I):\lambda\in\sigma_p(S)\bigr\}.
	\]
	For further details on the pure point spectrum and its basic properties, see \cite{864849}.
	
	The ideals of bounded finite-rank and compact operators on $H$ are denoted by $\mathcal{F}(H)$ and $\mathcal{K}(H)$, respectively.
	
	The following order-theoretic notion will play a central role.
	
	\begin{definition}\label{def:rwo}
		A subset $\Lambda$ of $\mathbb{R}$ is said to be \emph{reverse well-ordered} if every non-empty subset of $\Lambda$ has a greatest element.
	\end{definition}
	
	\begin{lemma}[Order-theoretic facts]\label{lem:rwo-facts}
		If $\Lambda\subseteq\mathbb{R}$ is reverse well-ordered, then $\Lambda$ and its closure $\overline{\Lambda}$ are countable. Moreover, if $x\in\overline{\Lambda}\setminus\Lambda$, then there exists a strictly decreasing sequence $\lambda_n\in\Lambda$ such that $\lambda_n\downarrow x$.
	\end{lemma}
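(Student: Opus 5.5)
To prove the lemma I would first turn the reverse well-ordering into a map $\Lambda\to\mathbb{Q}$ and read off countability, and then handle the closure and the approximating sequence together by a single description of $\overline{\Lambda}\setminus\Lambda$.

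\textbf{Countability of $\Lambda$.} If $\lambda\in\Lambda$ is not the least element of $\Lambda$, the set $\{\mu\in\Lambda:\mu<\lambda\}$ is non-empty. By Definition~\ref{def:rwo} it has a greatest element, the predecessor $\lambda^-$. Then the open interval $(\lambda^-,\lambda)$ contains no point of $\Lambda$. For distinct non-least $\lambda,\lambda'\in\Lambda$ these intervals are disjoint. Indeed, if $\lambda<\lambda'$, then $\lambda\le\lambda'^-$, so $(\lambda^-,\lambda)$ lies to the left of $(\lambda'^-,\lambda')$. Choosing a rational in each interval gives an injection from $\Lambda$ minus at most one point into $\mathbb{Q}$. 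Hence $\Lambda$ is countable.

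\textbf{The sequence and countability of $\overline{\Lambda}$.} Let $x\in\overline{\Lambda}\setminus\Lambda$, so some sequence in $\Lambda$ converges to $x$. I claim $x$ is not a limit from below. Suppose $\lambda_n\in\Lambda$ with $\lambda_n<x$ and $\lambda_n\to x$. The set $\{\mu\in\Lambda:\mu<x\}$ is non-empty and has a greatest element $m$. Since $x\notin\Lambda$, we have $m<x$. But $\lambda_n\le m$ for all $n$, so $\lambda_n\to x$ is impossible. Therefore $x$ is the limit of points of $\Lambda\cap(x,\infty)$, and this set has infimum $x$. I then build the sequence recursively. Pick $\lambda_1\in\Lambda$ with $x<\lambda_1<x+1$. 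Given $\lambda_n$, pick $\lambda_{n+1}\in\Lambda$ with $x<\lambda_{n+1}<\min(\lambda_n,x+1/(n+1))$, which is possible because the infimum is $x$. This yields a strictly decreasing sequence $\lambda_n\downarrow x$, which is the ``moreover'' part.

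This also shows every $x\in\overline{\Lambda}\setminus\Lambda$ equals $\inf\{\mu\in\Lambda:\mu>x\}$ and is not attained. So $x$ is determined by the set $U_x=\{\mu\in\Lambda:\mu>x\}$, which is an up-set of $\Lambda$ with no least element. To count such points I would use the map $x\mapsto m_x=\max\{\mu\in\Lambda:\mu<x\}$ when that set is non-empty. This is the step I expect to be the main obstacle, since injectivity must be checked carefully. If $x<y$ both lie in $\overline{\Lambda}\setminus\Lambda$, then points of $\Lambda$ accumulate at $x$ from above, so some $\mu\in\Lambda$ satisfies $x<\mu<y$. Hence $m_y\ge\mu>x>m_x$, so $m_x\ne m_y$. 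The map is therefore injective into the countable set $\Lambda$. At most one $x$, namely $\inf\Lambda$, has no element of $\Lambda$ below it. Hence $\overline{\Lambda}\setminus\Lambda$ is countable, and so $\overline{\Lambda}$ is countable.
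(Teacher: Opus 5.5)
Your proof is correct, but it is organized differently from the paper's.

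For the countability of $\Lambda$, the paper simply invokes the fact that a well-ordered subset of the separable space $\mathbb{R}$ is countable. Your predecessor-gap argument (the intervals $(\lambda^-,\lambda)$ are pairwise disjoint and each contains a rational) is exactly the standard proof of that fact, so here you are only supplying a detail the paper omits.

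The real divergence is in the closure. The paper first shows that $\overline{\Lambda}$ is itself reverse well-ordered. It takes a hypothetical strictly increasing sequence $x_1<x_2<\cdots$ in $\overline{\Lambda}$ and interlaces it with points $\lambda_n\in\Lambda$ chosen in the disjoint neighbourhoods $\bigl(\tfrac{x_{n-1}+x_n}{2},\tfrac{x_n+x_{n+1}}{2}\bigr)$, and then reapplies the countability statement. It proves the one-sided approximation afterwards and separately.

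You reverse the order. You prove the one-sided approximation first, applying the definition directly to $\{\mu\in\Lambda:\mu<x\}$. This is cleaner than the paper's detour through the sequential characterization of reverse well-ordering. You then make the approximation do double duty: it places a point of $\Lambda$ strictly between any two points of $\overline{\Lambda}\setminus\Lambda$, which is exactly what makes $x\mapsto\max\{\mu\in\Lambda:\mu<x\}$ injective, with $\inf\Lambda$ as the only possible exception.

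Your route is more economical and self-contained for the statement as posed. The paper's route yields the stronger structural fact that $\overline{\Lambda}$ is reverse well-ordered, which your injection does not give. However, the lemma, and its later use for $\overline{\sigma_p(S)}$ in the finite-rank stability lemma, only needs countability.
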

	
	\begin{proof}
		Every reverse well-ordered subset of $\mathbb{R}$ is countable. Indeed, under the reverse order $\lambda\preceq\mu$ if and only if $\lambda\geqslant\mu$, the set $\Lambda$ is well-ordered, and every well-ordered subset of the separable metric space $\mathbb{R}$ is countable.
		
		We next prove that $\overline{\Lambda}$ is also reverse well-ordered. Suppose, to the contrary, that $x_1<x_2<\cdots$ is a strictly increasing sequence in $\overline{\Lambda}$. Choose $\lambda_1\in\Lambda$ with $\lambda_1<(x_1+x_2)/2$. For each $n\geqslant2$, choose
		\[
		\lambda_n\in\Lambda\cap\left(\frac{x_n+x_{n-1}}{2},\frac{x_n+x_{n+1}}{2}\right).
		\]
		Such a choice is possible because $x_n\in\overline{\Lambda}$. Then $\lambda_1<\lambda_2<\cdots$, contradicting the reverse well-ordering of $\Lambda$. Hence $\overline{\Lambda}$ is reverse well-ordered and therefore countable.
		
		Now let $x\in\overline{\Lambda}\setminus\Lambda$. We claim that $\Lambda$ contains points arbitrarily close to $x$ from above. Otherwise, for some $\varepsilon>0$, one would have $\Lambda\cap(x,x+\varepsilon)=\varnothing$. Since $x\in\overline{\Lambda}$ and $x\notin\Lambda$, the set $\Lambda\cap(x-\varepsilon,x)$ would then have no greatest element. By the elementary characterization of reverse well-ordering for subsets of $\mathbb{R}$, it would contain a strictly increasing sequence, contradicting the reverse well-ordering of $\Lambda$. Therefore, for every $\varepsilon>0$, there exists $\lambda\in\Lambda$ with $x<\lambda<x+\varepsilon$. Choosing recursively gives a strictly decreasing sequence $\lambda_n\downarrow x$.
	\end{proof}
	
	Equivalently, the reverse order on $\Lambda$ is a well-order. In particular, a reverse well-ordered subset of $\mathbb{R}$ cannot contain a strictly increasing infinite sequence. Conversely, for subsets of $\mathbb{R}$, the absence of an increasing infinite sequence is equivalent to reverse well-ordering. Plainly, every finite set is reverse well-ordered; both $\{1/n:n\in\mathbb{N}^*\}$ and $\{1/n:n\in\mathbb{N}^*\}\cup\{0\}$ are reverse well-ordered.
	
	An operator $T\in \mathcal{B}(H)$ is \emph{norm attaining}, denoted $T\in\mathcal{N}(H)$, if there exists $x\in S_H$ such that $\|Tx\|=\|T\|$. The following characterization for self-adjoint operators is fundamental.
	
	\begin{lemma}[Reducing subspaces and spectral projections]\label{lem:reducing-spectral}
		Let $S=S^*$ and let $M\in\mathcal{R}_S$. Then $P_M$ commutes with every spectral projection $E_S(\Delta)$ of $S$. In particular, $M$ reduces every spectral projection of $S$.
	\end{lemma}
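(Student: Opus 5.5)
The argument reduces everything to the fact that $P_M$ commutes with $S$, and then passes commutation from $S$ to its spectral projections through the functional calculus.

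\emph{Step 1: $P_M$ commutes with $S$.} Put $P=P_M$. Since $M$ and $M^\perp$ are both invariant under $S$, we have
\[
SP = PSP \quad\text{and}\quad S(I-P) = (I-P)S(I-P).
\]
From the second identity, $S - SP = S - PS - SP + PSP$, so $PS = PSP = SP$. (Alternatively, invariance of $M^\perp$ under $S$ is equivalent to invariance of $M$ under $S^* = S$, and one can take adjoints in $SP = PSP$.)

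\emph{Step 2: $P$ commutes with bounded Borel functions of $S$.} Since $P$ commutes with $S$, it commutes with every polynomial $p(S)$. By Weierstrass approximation on the compact set $\sigma(S)\subseteq\mathbb{R}$, $P$ commutes with $f(S)$ for every continuous $f$, because the continuous functional calculus is norm-continuous in $f$. The class $\mathcal{C}$ of bounded Borel functions $f$ with $Pf(S)=f(S)P$ is closed under bounded pointwise limits, since the Borel functional calculus carries such limits to strong operator limits. Continuous functions generate the Borel $\sigma$-algebra under bounded pointwise limits, so $\mathcal{C}$ contains all bounded Borel functions. In particular it contains $\chi_\Delta$, and hence $PE_S(\Delta)=E_S(\Delta)P$.

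Step 2 is the only step with real content. The most economical way to write it is to cite the standard fact from the spectral theorem that any bounded operator commuting with a self-adjoint $S$ commutes with all of its spectral projections (Fuglede's theorem specialised to self-adjoint operators, or the bicommutant characterisation of the von Neumann algebra generated by $S$). If a self-contained argument is wanted instead, use the measure form: for fixed $x,y$, the complex measures $\Delta\mapsto\langle E_S(\Delta)Px,y\rangle$ and $\Delta\mapsto\langle E_S(\Delta)x,Py\rangle$ have equal integrals against every polynomial. Polynomials are dense in $C(\sigma(S))$, so the Riesz representation theorem forces the two measures to coincide.

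\emph{Step 3: the last assertion.} Fix $\Delta$ and put $E=E_S(\Delta)$. From $EP=PE$ we get $E(M)=EP(H)=PE(H)\subseteq M$, so $M$ is invariant under $E$. Likewise $E(I-P)=(I-P)E$ gives $E(M^\perp)\subseteq M^\perp$. Hence $M$ reduces every spectral projection $E_S(\Delta)$.
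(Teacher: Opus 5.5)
Your proposal is correct and follows the same route as the paper: you derive $P_MS=SP_M$ from invariance of $M$ and $M^\perp$, then pass to the spectral projections via the Borel functional calculus with $f=\chi_\Delta$. You also spell out, through the monotone-class or Riesz-measure argument and the explicit reducing check in Step 3, the steps that the paper's proof only cites as standard.
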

	
	\begin{proof}
		Since $M$ reduces $S$, both $M$ and $M^\perp$ are invariant under $S$. Hence $P_M$ commutes with $S$. By the bounded Borel functional calculus, $P_M$ commutes with $f(S)$ for every bounded Borel function $f$ on $\sigma(S)$. In particular, taking $f=\chi_\Delta$ gives
		\[
		P_M E_S(\Delta)=E_S(\Delta)P_M
		\]
		for every Borel set $\Delta$. Thus $M$ reduces every spectral projection of $S$.
	\end{proof}
	
	\begin{proposition}[{\cite[Proposition 2.4]{Carvajal2012}}]\label{prop:NA}
		Let $T\in \mathcal{B}(H)$ be self-adjoint. Then
		\begin{enumerate}
			\item[(1)] $T\in\mathcal{N}(H)$ if and only if either $\|T\|\in\sigma_p(T)$ or $-\|T\|\in\sigma_p(T)$;
			\item[(2)] if $T\geqslant 0$, then $T\in\mathcal{N}(H)$ if and only if $\|T\|\in\sigma_p(T)$.
		\end{enumerate}
	\end{proposition}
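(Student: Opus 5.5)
The plan is to use the variational characterization of the norm of a self-adjoint operator, $\|T\|=\sup_{x\in S_H}|\langle Tx,x\rangle|=\max\{|m|,|M|\}$, where $m=\inf_{x\in S_H}\langle Tx,x\rangle$ and $M=\sup_{x\in S_H}\langle Tx,x\rangle$. We also use the fact that $\|Tx\|=\|T\|$ for a unit vector $x$ forces $T^2x=\|T\|^2x$. Part (2) will then be deduced from part (1).

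For the \emph{if} direction of (1), suppose $Tx=\lambda x$ with $\|x\|=1$ and $\lambda=\pm\|T\|$. Then $\|Tx\|=|\lambda|=\|T\|$, so $T\in\mathcal{N}(H)$.

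For the \emph{only if} direction, let $x\in S_H$ with $\|Tx\|=\|T\|=:\alpha$. If $\alpha=0$, then $T=0$ and $0\in\sigma_p(T)$ trivially, so assume $\alpha>0$. First, $\langle T^2x,x\rangle=\|Tx\|^2=\alpha^2=\|T^2\|$. Since $\alpha^2 I-T^2\geqslant 0$ and $\langle(\alpha^2I-T^2)x,x\rangle=0$, the Cauchy--Schwarz inequality for the positive form $\langle(\alpha^2I-T^2)\cdot,\cdot\rangle$ gives $\|(\alpha^2I-T^2)^{1/2}x\|=0$. Hence $T^2x=\alpha^2x$, that is, $(T-\alpha I)(T+\alpha I)x=0$.

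Set $y=(T+\alpha I)x$. If $y\neq0$, then $Ty=\alpha y$, so $\alpha\in\sigma_p(T)$. If $y=0$, then $Tx=-\alpha x$ with $x\neq0$, so $-\alpha\in\sigma_p(T)$. This proves (1).

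For (2), let $T\geqslant0$. Then $\sigma(T)\subseteq[0,\infty)$, so $-\|T\|\in\sigma_p(T)$ can only occur when $\|T\|=0$, in which case $-\|T\|=\|T\|$. The dichotomy in (1) therefore collapses to the single condition $\|T\|\in\sigma_p(T)$.

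The only step needing care is deriving the eigenvector equation from norm attainment, namely getting $T^2x=\alpha^2x$ from $\langle T^2x,x\rangle=\alpha^2$. This is handled by the positivity argument with the square root of $\alpha^2I-T^2$. An alternative is to expand $\|T^2x-\alpha^2x\|^2=\|T^2x\|^2-2\alpha^2\langle T^2x,x\rangle+\alpha^4\leqslant\alpha^4-2\alpha^4+\alpha^4=0$, using $\|T^2x\|\leqslant\|T^2\|=\alpha^2$.
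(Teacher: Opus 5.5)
Your proof is correct and complete. The paper gives no proof of this statement; it quotes it from \cite[Proposition 2.4]{Carvajal2012} and uses it as a black box, in the necessity part of Theorem~\ref{thm:structure-corrected} and in Proposition~\ref{prop:beta-characterization}. So there is no in-paper argument to compare against.

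Your route is the standard one and needs nothing beyond positivity of $\alpha^2I-T^2$. You show that a norming vector $x$ is an eigenvector of $T^*T=T^2$ for the eigenvalue $\|T\|^2$. You then split $(T-\alpha I)(T+\alpha I)x=0$ via $y=(T+\alpha I)x$. The argument never uses infinite-dimensionality or separability, so it works verbatim on any nonzero Hilbert space. That is what the paper actually needs, since it applies the proposition to restrictions such as $S|_{H_c}$ and $|T|\big|_M$ on arbitrary nonzero closed subspaces.

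Two cosmetic points:
\begin{itemize}
\item The variational formula $\|T\|=\sup_{x\in S_H}|\langle Tx,x\rangle|$ announced in your first paragraph is never used and can be dropped.
\item The step you attribute to Cauchy--Schwarz is really the identity $\langle Px,x\rangle=\|P^{1/2}x\|^2$ for $P\geqslant0$. Cauchy--Schwarz for the form $\langle P\cdot,\cdot\rangle$ would instead give $|\langle Px,z\rangle|^2\leqslant\langle Px,x\rangle\langle Pz,z\rangle=0$ for all $z$, hence $Px=0$ directly, without square roots. Either way the conclusion $T^2x=\alpha^2x$ is sound, as is your alternative computation of $\|T^2x-\alpha^2x\|^2$.
\end{itemize}
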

	
	The central object of this paper is the following class, introduced by Ramesh and Osaka \cite{Ramesh2021}:
	\begin{definition}\label{def:beta}
		\[
		\beta(H):=\{T\in \mathcal{B}(H):\text{$T|_M$ is norm attaining for every non-zero }M\in\mathcal{R}_T\}.
		\]
	\end{definition}
	
	The following proposition corrects the equivalence stated in \cite[Proposition 3.5(i)]{Ramesh2021}. Only one implication holds in general.
	
	\begin{proposition}[Corrected modulus proposition]\label{thm:positive}
		Let $T\in\mathcal{B}(H)$. If $|T|\in\beta(H)$, then $T\in\beta(H)$. Equivalently, $T\in\beta(H)$ is necessary for $|T|\in\beta(H)$. The converse implication fails in general.
	\end{proposition}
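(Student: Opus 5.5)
The plan is to prove the positive implication directly from the definitions, using Lemma~\ref{lem:reducing-spectral} and Proposition~\ref{prop:NA}. The negative assertion (the converse fails) is deferred to the explicit counterexample, Example~\ref{ex:positive-fails}.

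Assume $|T|\in\beta(H)$ and let $M\in\mathcal{R}_T$ be non-zero.

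\textbf{Step 1: $M$ reduces $|T|$.} Since $M$ reduces $T$, the projection $P_M$ commutes with $T$ and, by taking adjoints, with $T^*$. Hence $P_M$ commutes with $T^*T$. By the continuous functional calculus (square roots are norm limits of polynomials), $P_M$ then commutes with $|T|=(T^*T)^{1/2}$. Therefore $M\in\mathcal{R}_{|T|}$.

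\textbf{Step 2: compute the norm of $T|_M$.} For $x\in M$ we have $\|Tx\|=\||T|x\|$. It follows that
\[
\|T|_M\|=\||T|\,|_M\|,
\]
and that a unit vector $x\in M$ attains the norm of $T|_M$ exactly when it attains the norm of $|T|\,|_M$.

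\textbf{Step 3: conclude.} Since $|T|\in\beta(H)$ and $M$ is a non-zero reducing subspace of $|T|$, the restriction $|T|\,|_M$ is norm attaining. By Step~2, so is $T|_M$. As $M$ was arbitrary, $T\in\beta(H)$. The word ``equivalently'' in the statement is just the contrapositive phrasing of this implication.

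\textbf{The converse.} This is the only real obstacle, since a reducing subspace of $|T|$ need not reduce $T$. The idea is to find $T$ with very few reducing subspaces, so that $T\in\beta(H)$ holds trivially, while $|T|$ has a reducing subspace on which its norm is not an eigenvalue (Proposition~\ref{prop:NA}(2)).

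The first attempt would be $T=U|T|$ with $|T|=\operatorname{diag}(1-1/n)\oplus 1$ and $U$ a partial isometry that mixes the coordinates enough to destroy most reducing subspaces. Then $T$ attains its norm $1$ on the distinguished vector, while $|T|$ restricted to the span of the first family has norm $1$ with no eigenvector.

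The delicate part is verifying that every non-zero reducing subspace of $T$ contains a norm-attaining vector. To check this, I would compute $T^*T$ and $TT^*$ and show that any reducing subspace of $T$ must be invariant under both. That invariance should force the subspace to contain the eigenvector for the eigenvalue $1$ of $|T|$, or else to be trivial.
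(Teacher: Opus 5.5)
Your proof of $|T|\in\beta(H)\Rightarrow T\in\beta(H)$ is correct and is essentially the paper's argument: reducing subspaces of $T$ reduce $|T|$, and $\|Tx\|=\||T|x\|$ on $M$. You also supply the functional-calculus reason that $M\in\mathcal{R}_T$ implies $M\in\mathcal{R}_{|T|}$, which the paper only asserts. Like the paper, you may simply cite Example~\ref{ex:positive-fails} for the failure of the converse.

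Your own sketch of that counterexample is not finished. $U$ is never specified, and the claim that every reducing subspace of $T$ either contains the eigenvector for $1$ or is trivial is left unproved. The natural choice closes it and reproduces the paper's example: take $|T|=\operatorname{diag}(1,\tfrac12,\tfrac23,\dots)$ and $U$ the unilateral shift, i.e.\ $Te_n=a_ne_{n+1}$ with $a_1=1$ and $a_n=1-1/n$ for $n\geqslant 2$. The missing step then takes one line:
\begin{itemize}
\item If $M\in\mathcal{R}_T$, then $P_M$ commutes with $T^*$, so it maps $\ker T^*=\operatorname{span}\{e_1\}$ into itself.
\item Hence $e_1\in M$ or $e_1\in M^\perp$.
\item Applying powers of $T$ gives $M=H$ or $M=\{0\}$.
\end{itemize}
The last step needs every weight to be non-zero, so start $1-1/n$ at $n=2$. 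A zero weight placed after the eigenvector for $1$ would split off the tail as a reducing subspace on which $T$ has norm $1$ without attaining it. With non-zero weights, $T$ is irreducible and trivially in $\beta(H)$, while $|T|$ fails to attain its norm on $\overline{\operatorname{span}}\{e_n:n\geqslant2\}$.

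Incidentally, the paper's justification ``$\{T\}'=\mathbb{C}I$'' should read $\{T,T^*\}'=\mathbb{C}I$, since the commutant of $T$ always contains $T$ itself.
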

	
	\begin{proof}
		Let $M\in\mathcal{R}_T$ be non-zero. Since $M$ reduces $T$, it also reduces $|T|$. By hypothesis, $|T|\in\beta(H)$, so there exists $x\in S_M$ such that
		\[
		\||T|x\|=\|\,|T|\big|_M\|.
		\]
		But
		\[
		\|Tx\|=\||T|x\|,\qquad\|T|_M\|=\|\,|T|\big|_M\|,
		\]
		so $T|_M$ attains its norm at $x$. Since $M\in\mathcal{R}_T$ was arbitrary, $T\in\beta(H)$.
	\end{proof}
	
	\begin{example}\label{ex:positive-fails}
		The converse implication in Proposition~\ref{thm:positive} fails. Consider the unilateral weighted shift $T$ on $\ell^2$ defined by
		\[
		Te_n=a_ne_{n+1},\qquad a_1=1,\quad a_n=1-\frac{1}{n}\ (n\geqslant2).
		\]
		The weights are non-zero and pairwise distinct, so $\{T\}'=\mathbb{C}I$ and hence $\mathcal{R}_T=\{0,H\}$. Since $\|T\|=1$ and $\|Te_1\|=\|e_2\|=1$, the operator $T$ attains its norm. Therefore $T\in\beta(H)$.
		
		On the other hand, $|T|e_n=a_ne_n$, so $|T|$ is diagonal with respect to $\{e_n\}$ and
		\[
		\sigma_p(|T|)=\{1\}\cup\left\{1-\frac{1}{n}:n\geqslant2\right\}.
		\]
		The subset $\{1-1/n:n\geqslant2\}$ has no greatest element, so $\sigma_p(|T|)$ is not reverse well-ordered. By Theorem~\ref{thm:structure-corrected}, $|T|\notin\beta(H)$. Thus $T\in\beta(H)$ but $|T|\notin\beta(H)$.
	\end{example}
	
	\begin{definition}\label{def:beta0}
		Define
		\[
		\beta_0(H):=\{T\in\mathcal{B}(H):|T|\in\beta(H)\}.
		\]
		By Proposition~\ref{thm:positive}, $\beta_0(H)\subseteq\beta(H)$, and the inclusion is proper in general (see Example~\ref{ex:positive-fails}).
	\end{definition}
	
	The next structure theorem is the main technical tool throughout Sections~4--6. The original statement in \cite[Theorem 4.6]{Ramesh2021} is discussed in Section~3, where we explain why an $\mathbb{N}$-indexed decreasing representation is not sufficiently general. The corrected formulation in terms of reverse well-ordered point spectra is proved as Theorem~\ref{thm:structure-corrected}, stated for the class $\beta_0(H)$.
	
	Based on this representation, we introduce four auxiliary quantities that will be used systematically throughout the paper.
	
	\begin{definition}\label{def:auxiliary}
		Let $T\in\beta_0(H)$ with $|T|=\sum_{\lambda\in\sigma_p(|T|)}\lambda E_{|T|}(\{\lambda\})$ as in Theorem~\ref{thm:structure-corrected}. Set
		\[
		c:=\inf\sigma_p(|T|),
		\]
		and call it the \emph{limit modulus} of $T$. Since $|T|$ has pure point spectrum, $\sigma(|T|)=\overline{\sigma_p(|T|)}\subseteq[c,\infty)$, so $|T|\geqslant cI$. The number $c$ is finite because $|T|$ is bounded and positive, and it need not belong to $\sigma_p(|T|)$. Define the \emph{residual operator}
		\[
		D:=T^*T-c^2I=\sum_{\lambda\in\sigma_p(|T|)}(\lambda^2-c^2)E_{|T|}(\{\lambda\})\geqslant 0.
		\]
		For a compact perturbation $K\in\mathcal{K}(H)$, define the \emph{interaction operator}
		\[
		R:=T^*K+K^*T+K^*K,
		\]
		which is self-adjoint and compact, and the \emph{total perturbation operator}
		\[
		L:=D+R.
		\]
	\end{definition}
	
	Note that $R$ is self-adjoint and compact, $L$ is self-adjoint, and it is compact precisely when $D$ is compact. Since $D$ has the spectral decomposition
	\[
	D=\sum_{\lambda\in\sigma_p(D)}\lambda Q_\lambda,
	\]
	where $\sigma_p(D)=\{\lambda^2-c^2:\lambda\in\sigma_p(|T|)\}$ and $Q_\lambda$ is the orthogonal projection onto the corresponding eigenspace, the compactness of $D$ is equivalent to the following condition: for every $\varepsilon>0$, the set
	\[
	\{\lambda\in\sigma_p(D):\lambda\geqslant \varepsilon\}
	\]
	is finite and every positive eigenvalue of $D$ has finite multiplicity. These four objects---$c$, $D$, $R$, $L$---form the backbone of all our subsequent analysis. The limit modulus $c$ captures the asymptotic behaviour of the eigenvalues of $|T|$; the residual operator $D$ measures the deviation of $T^*T$ from $c^2 I$ and governs whether the compactness hypothesis in Theorem~\ref{thm:main} is satisfied; the interaction operator $R$ encodes the interplay between $T$ and the perturbation $K$; and the total perturbation operator $L=D+R$ combines both contributions.
	
	For a self-adjoint operator, the spectral theorem provides a functional calculus: if $f$ is a real-valued continuous function on $\sigma(S)$, then $f(S)$ is a well-defined self-adjoint operator. Its spectral measure satisfies $E_{f(S)}(\Delta)=E_S(f^{-1}(\Delta))$ for every Borel set $\Delta$. In particular, every eigenvector of $S$ with eigenvalue $\lambda$ is an eigenvector of $f(S)$ with eigenvalue $f(\lambda)$.
	
	More precisely, for a self-adjoint operator $S\in\mathcal{B}(H)$, the spectral theorem
	asserts the existence of a unique projection-valued measure $E_S$ on the Borel
	$\sigma$-algebra of $\sigma(S)$ such that
	\[
	S=\int_{\sigma(S)}\lambda\,dE_S(\lambda),\qquad
	I=\int_{\sigma(S)}dE_S(\lambda).
	\]
	For each Borel set $\Delta\subseteq\sigma(S)$, $E_S(\Delta)$ is an orthogonal
	projection, and the mapping $\Delta\mapsto E_S(\Delta)$ is countably additive in the
	strong operator topology. When $S$ has pure point spectrum, the spectral measure is
	purely atomic: $E_S(\{\lambda\})$ is the orthogonal projection onto
	$\ker(S-\lambda I)$ for each eigenvalue $\lambda$, and the integral reduces to a
	discrete sum
	\[
	S=\sum_{\lambda\in\sigma_p(S)}\lambda\,E_S(\{\lambda\}),\qquad
	\sum_{\lambda\in\sigma_p(S)}E_S(\{\lambda\})=I.
	\]
	In particular, for a compact self-adjoint operator, the non-zero spectrum consists of
	eigenvalues of finite multiplicity with $0$ as the only possible accumulation point.
	The spectral measure is then supported on the non-zero eigenvalues together with the point $0$, which may belong to the spectrum without being an eigenvalue.
	
	Finally, we recall the standard spectral characterization of compact self-adjoint operators, which will be used repeatedly.
	
	\begin{theorem}[{\cite[Chapter II, Theorem 5.1]{Conway}}]\label{thm:compact}
		Let $T\in \mathcal{B}(H)$ be self-adjoint. Then $T$ is compact if and only if $\sigma(T)\setminus\{0\}=\sigma_p(T)\setminus\{0\}$, every non-zero eigenvalue has finite multiplicity, and the set of non-zero eigenvalues (if infinite) has $0$ as its only accumulation point.
	\end{theorem}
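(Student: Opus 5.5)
We prove the two implications of Theorem~\ref{thm:compact} separately, using only the spectral theorem for self-adjoint operators recalled above.

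\emph{Necessity.} Assume $T$ is compact and self-adjoint. We use the Riesz theory for compact operators: for $\lambda\neq0$, $T-\lambda I$ is Fredholm of index zero. Hence $T-\lambda I$ fails to be invertible exactly when it fails to be injective, which gives $\sigma(T)\setminus\{0\}=\sigma_p(T)\setminus\{0\}$. Finite multiplicity follows because $T$ acts as $\lambda I$ on $\ker(T-\lambda I)$. The unit ball of that space is therefore mapped onto a set $\lambda\cdot(\text{ball})$ with compact closure, so the space is finite-dimensional.

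Now suppose infinitely many distinct nonzero eigenvalues satisfy $|\lambda_n|\geqslant\varepsilon$. Choose unit eigenvectors $x_n$; they are mutually orthogonal since $T$ is self-adjoint. Then
\[
\|Tx_n-Tx_m\|^2=\lambda_n^2+\lambda_m^2\geqslant2\varepsilon^2 ,
\]
so $(Tx_n)$ has no Cauchy subsequence, contradicting compactness. Consequently the nonzero eigenvalues can accumulate only at $0$. Since $\sigma(T)$ is bounded and infinite sets of eigenvalues must have an accumulation point, $0$ is the accumulation point whenever the set is infinite.

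\emph{Sufficiency.} Assume the three spectral conditions hold and let $\lambda_1,\lambda_2,\dots$ enumerate the nonzero eigenvalues. Then $\sigma(T)\subseteq\{0\}\cup\{\lambda_n\}$. Each nonzero $\lambda_n$ is an isolated point of $\sigma(T)$, so the spectral projection $E_T(\{\lambda_n\})$ is the projection onto $\ker(T-\lambda_nI)$ and has finite rank. For $\varepsilon>0$, set
\[
T_\varepsilon:=\sum_{|\lambda_n|\geqslant\varepsilon}\lambda_nE_T(\{\lambda_n\}).
\]
This is a finite sum of finite-rank operators, since by hypothesis only finitely many $|\lambda_n|$ exceed $\varepsilon$. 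By the functional calculus, $T-T_\varepsilon=f_\varepsilon(T)$, where $f_\varepsilon(t)=t\,\chi_{\{|t|<\varepsilon\}}(t)$. Hence $\|T-T_\varepsilon\|\leqslant\sup_{t\in\sigma(T)}|f_\varepsilon(t)|\leqslant\varepsilon$. So $T$ is a norm limit of finite-rank operators and is therefore compact.

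\emph{Main obstacle.} The delicate step is the identification $\sigma(T)\setminus\{0\}=\sigma_p(T)\setminus\{0\}$ in the necessity direction. It relies on the Fredholm alternative. A self-contained alternative exists if one wishes to avoid that theory. For $\lambda\neq0$ with $\lambda\in\sigma(T)$, self-adjointness gives an approximate eigensequence $(x_n)$ with $\|(T-\lambda)x_n\|\to0$. By compactness a subsequence of $(Tx_n)$ converges. Hence $x_n=\lambda^{-1}\bigl(Tx_n-(T-\lambda)x_n\bigr)$ converges to a unit vector $x$ with $Tx=\lambda x$. In the sufficiency direction, one must also note that a point $\lambda\neq0$ of $\sigma(T)$ is isolated. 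This holds because the only possible accumulation point of the eigenvalues is $0$, so the spectral projection onto $\{\lambda\}$ really is the eigenprojection.
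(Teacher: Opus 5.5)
Your argument is correct. There is no proof in the paper to compare it with: the statement is quoted from Conway, and the cited result there is the spectral theorem for compact self-adjoint operators. As Conway states it, that theorem gives only the forward implication, in the form of a norm-convergent expansion $T=\sum_n\lambda_nP_n$ over the distinct non-zero eigenvalues. The textbook proof rests on the variational fact that a non-zero compact self-adjoint operator has $\|T\|$ or $-\|T\|$ as an eigenvalue; this forces $T$ to vanish on the orthogonal complement of its eigenvectors for non-zero eigenvalues.

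Your route differs in both halves. You get $\sigma(T)\setminus\{0\}\subseteq\sigma_p(T)$ from the Fredholm alternative, or, in your self-contained variant, from approximate eigenvectors plus compactness, which is the more elementary choice. You then prove the converse directly by truncation in the Borel functional calculus. The textbook route gives the explicit diagonalization; yours gives a proof of the two-sided equivalence exactly as the paper formulates it.

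That matters here. The paper uses the sufficiency half in the proof of Theorem~\ref{thm:hatbeta}, to conclude from $\sigma_{\mathrm{ess}}(|T|)\subseteq\{0\}$ that $|T|$ is compact. That half is not literally part of Conway's statement, although it follows quickly from the expansion, since norm limits of finite-rank operators are compact.

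One small correction to your closing remark. Isolation of $\lambda\neq0$ is not what makes $E_T(\{\lambda\})$ the eigenprojection, since $\operatorname{ran}E_T(\{\lambda\})=\ker(T-\lambda I)$ holds for every real $\lambda$ and every self-adjoint $T$. What your estimate really needs is that $\sigma(T)\cap\{t:|t|\geqslant\varepsilon\}$ is the finite set $\{\lambda_n:|\lambda_n|\geqslant\varepsilon\}$. Then $t\,\chi_{\{|t|\geqslant\varepsilon\}}(t)$ agrees on $\sigma(T)$ with $\sum_{|\lambda_n|\geqslant\varepsilon}\lambda_n\chi_{\{\lambda_n\}}(t)$. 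This is exactly where the hypotheses $\sigma(T)\setminus\{0\}=\sigma_p(T)\setminus\{0\}$ and accumulation only at $0$ are used.
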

	
	\section{The structure theorem revisited}\label{sec3}
	\setcounter{equation}{0}
	
	The structure theorem for positive operators in $\beta(H)$ is the starting point of the perturbation analysis in this paper. In \cite[Theorem 4.6]{Ramesh2021} it was stated in the following form.
	
	\begin{theorem}[{\cite[Theorem 4.6]{Ramesh2021}}]\label{thm:structure-original}
		Let $T\in\mathcal{B}(H)$. Then $T\in\beta(H)$ if and only if there exist a non-increasing sequence $\{a_n\}_{n=1}^\infty$ in $[0,\infty)$ and a family $\{P_n\}_{n=1}^\infty$ of mutually orthogonal projections such that
		\[
		\sum_{n=1}^\infty P_n=I,
		\]
		and
		\[
		T^*T=\sum_{n=1}^\infty a_n^2 P_n,\qquad |T|=\sum_{n=1}^\infty a_n P_n.
		\]
	\end{theorem}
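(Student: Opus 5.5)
The plan is to split the statement into its two implications. The sufficiency direction can be proved along the lines of Proposition~\ref{thm:positive}. The necessity direction cannot be proved as stated, and the plan there is to exhibit counterexamples.

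\emph{Sufficiency.} Suppose $|T|=\sum_n a_nP_n$ with $a_n\geqslant 0$ non-increasing and $\sum_n P_n=I$. Then $|T|$ has pure point spectrum and $\sigma_p(|T|)=\{a_n:P_n\neq0\}$. This set is reverse well-ordered: a non-empty subset $\{a_n:n\in J\}$ of values of a non-increasing sequence attains its maximum at $a_{\min J}$. I would then show directly that $|T|\in\beta(H)$ in four steps.
\begin{enumerate}
\item[(1)] Let $M\in\mathcal{R}_{|T|}$ be non-zero. By Lemma~\ref{lem:reducing-spectral}, $P_M$ commutes with each spectral projection $E_{|T|}(\{\lambda\})$.
\item[(2)] Hence $M=\bigoplus_\lambda \bigl(M\cap\ker(|T|-\lambda I)\bigr)$, so $|T|\big|_M$ is diagonal with point spectrum $\Lambda_M=\{\lambda:M\cap\ker(|T|-\lambda I)\neq0\}$, and $\Lambda_M$ is non-empty.
\item[(3)] Its norm is $\sup\Lambda_M$. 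Since $\Lambda_M\subseteq\sigma_p(|T|)$ is reverse well-ordered, this supremum is a maximum, hence an eigenvalue of $|T|\big|_M$.
\item[(4)] By Proposition~\ref{prop:NA}(2), $|T|\big|_M$ attains its norm. Therefore $|T|\in\beta(H)$, and Proposition~\ref{thm:positive} gives $T\in\beta(H)$.
\end{enumerate}

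\emph{Necessity.} This is the real obstacle, because it is false in two independent ways. First, $T\in\beta(H)$ does not even force $|T|$ to have a reverse well-ordered point spectrum. I would invoke Example~\ref{ex:positive-fails}: there $T\in\beta(H)$, but $|T|$ has simple eigenvalues $1,\tfrac12,\tfrac23,\tfrac34,\dots$. A representation $|T|=\sum a_nP_n$ with mutually orthogonal $P_n$ summing to $I$ forces every eigenvalue to occur among the $a_n$. A non-increasing sequence whose value set is $\{1\}\cup\{1-1/n:n\geqslant2\}$ would have to contain a strictly increasing subsequence, which is impossible.

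Second, the $\mathbb{N}$-indexing is too restrictive even for positive operators that do lie in $\beta(H)$. Take $A=\operatorname{diag}(1,\tfrac12,\tfrac13,\dots)\oplus 0$ on $\ell^2\oplus\ell^2$. Its point spectrum $\{1/n\}\cup\{0\}$ is reverse well-ordered, so $A\in\beta(H)$ by the argument in steps (1)--(4). However, a non-increasing sequence indexed by $\mathbb{N}$ taking every value $1/n$ must tend to $0$ without reaching it, so the eigenvalue $0$ is never listed. More generally, any reverse well-ordered set of order type beyond $\omega$ escapes such a representation.

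\emph{Salvage.} The conclusion is that only the ``if'' direction can be proved. The correct replacement for the ``only if'' direction is the pair of conditions that $|T|$ has pure point spectrum and that $\sigma_p(|T|)$ is reverse well-ordered, indexed by that set rather than by $\mathbb{N}$. For positive operators, I would prove necessity of these conditions as follows.
\begin{enumerate}
\item[(a)] Pure point spectrum: restrict to the reducing subspace $M$ equal to the orthogonal complement of the closed span of all eigenvectors. If $M\neq0$, then $|T|\big|_M$ has no eigenvalue, so by Proposition~\ref{prop:NA}(2) it cannot attain its norm, a contradiction.
\item[(b)] Reverse well-ordering: suppose some set $\Lambda\subseteq\sigma_p(|T|)$ has no greatest element. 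Restrict $|T|$ to $\bigoplus_{\lambda\in\Lambda}\ker(|T|-\lambda I)$. The norm of this restriction is $\sup\Lambda$, which is not an eigenvalue of the restriction, so again Proposition~\ref{prop:NA}(2) gives a contradiction.
\end{enumerate}
This is exactly the content of Theorem~\ref{thm:structure-corrected}.
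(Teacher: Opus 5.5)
Your proposal is correct and follows the paper's own treatment. The paper likewise proves no ``only if'' direction. It refutes the $\mathbb{N}$-indexed necessity with the same operator $\operatorname{diag}(1,\tfrac12,\tfrac13,\dots)\oplus 0$ (Example~\ref{ex:counter-structure}), and your sufficiency argument and salvage coincide with Proposition~\ref{prop:reverse} and Theorem~\ref{thm:structure-corrected}. Your extra use of Example~\ref{ex:positive-fails} gives a second, independent failure of necessity, namely the gap between $\beta(H)$ and $\beta_0(H)$; it is valid and consistent with the paper's Section~2, which makes that point only in terms of the modulus proposition rather than against this statement.
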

	
	The $\mathbb{N}$-indexed formulation does not cover the full class considered here. Requiring the eigenvalues to be arranged as a \emph{sequence indexed by the natural numbers} imposes an additional restriction: when $|T|$ has infinitely many positive eigenvalues accumulating at a point which itself is an eigenvalue, the descending arrangement of the eigenvalues may require an ordinal index larger than $\omega$, and hence cannot be fitted into a single $\mathbb{N}$-indexed sequence. The following elementary example exhibits this defect.
	
	\begin{example}\label{ex:counter-structure}
		Let $H=\ell^2\oplus\ell^2$ and let $\{e_n\}_{n=1}^\infty$, $\{f_n\}_{n=1}^\infty$ be orthonormal bases of the first and the second copy of $\ell^2$, respectively. Define a positive diagonal operator $S$ on $H$ by
		\[
		Se_n=\frac{1}{n}e_n,\qquad Sf_n=0,\qquad n\in\mathbb N^*.
		\]
		Then
		\[
		\sigma_p(S)=\left\{\frac1n:n\in\mathbb N^*\right\}\cup\{0\}.
		\]
		Moreover,
		\[
		H=\overline{\operatorname{span}}
		\left(
		\{e_n:n\ge1\}\cup\{f_n:n\ge1\}
		\right),
		\]
		so $S$ has pure point spectrum. We next verify that $\sigma_p(S)$ is reverse well-ordered. Let $\Lambda$ be any nonempty subset of $\sigma_p(S)$. If $\Lambda=\{0\}$, then clearly $\max\Lambda=0$. Otherwise, $\Lambda$ contains a positive element. Define
		\[
		N:=\min\left\{n\in\mathbb N^*:\frac1n\in\Lambda\right\}.
		\]
		Then $1/n\leqslant1/N$ for every $1/n\in\Lambda$, and hence
		\[
		\max\Lambda=\frac1N.
		\]
		Thus $\sigma_p(S)$ is reverse well-ordered. We now verify directly that $S\in\beta(H)_+$. Let $M\in\mathcal R_S\setminus\{0\}$. Since $S$ is self-adjoint and $M$ reduces $S$, the subspace $M$ reduces every spectral projection of $S$. Hence
		\[
		M=\overline{\bigoplus_{\lambda\in\Lambda_M}\bigl(\ker(S-\lambda I)\cap M\bigr)},
		\qquad
		\Lambda_M:=\{\lambda\in\sigma_p(S):\ker(S-\lambda I)\cap M\neq\{0\}\}.
		\]
		Because $M\neq\{0\}$, the set $\Lambda_M$ is nonempty. Since $\sigma_p(S)$ is reverse well-ordered, there exists
		\[
		\lambda_*:=\max\Lambda_M.
		\]
		Choose a unit vector $x_0\in\ker(S-\lambda_*I)\cap M$. Then $Sx_0=\lambda_*x_0$. For every unit vector $x\in M$, write $x=\sum_{\lambda\in\Lambda_M}x_\lambda$ with $x_\lambda\in\ker(S-\lambda I)\cap M$. The eigenspaces corresponding to distinct eigenvalues are mutually orthogonal, and therefore
		\[
		\|Sx\|^2=\sum_{\lambda\in\Lambda_M}\lambda^2\|x_\lambda\|^2\leqslant\lambda_*^2\sum_{\lambda\in\Lambda_M}\|x_\lambda\|^2=\lambda_*^2.
		\]
		Consequently, $\|S|_M\|=\lambda_*=\|Sx_0\|$, so $S|_M$ attains its norm. Since $M$ was arbitrary, $S\in\beta(H)_+$.
		
		We now show that $S$ cannot be represented in the form
		\[
		S=\sum_{n=1}^\infty a_nP_n,
		\]
		where $\{a_n\}_{n=1}^\infty$ is a non-increasing sequence of nonnegative numbers, $\{P_n\}_{n=1}^\infty$ is a family of mutually orthogonal projections, and
		\[
		\sum_{n=1}^\infty P_n=I
		\]
		in the strong operator topology.
		
		Suppose, to the contrary, that such a representation exists. Since the projections are mutually orthogonal, for every $m\in\mathbb N^*$ we have
		\[
		SP_m=\left(\sum_{n=1}^\infty a_nP_n\right)P_m=a_mP_m,
		\qquad
		P_mS=P_m\left(\sum_{n=1}^\infty a_nP_n\right)=a_mP_m.
		\]
		Thus
		\[
		SP_m=P_mS=a_mP_m.
		\]
		In particular,
		\[
		R(P_m)\subseteq\ker(S-a_mI),
		\]
		so every nonzero $P_m$ corresponds to an eigenvalue $a_m$ of $S$.
		
		Fix $k\in\mathbb N^*$. Since $Se_k=\frac1k e_k$ and
		\[
		e_k=\sum_{m=1}^\infty P_me_k,
		\]
		there exists some $m=m(k)$ such that $P_me_k\neq0$. For this $m$,
		\[
		SP_me_k=P_mSe_k=\frac1kP_me_k,
		\]
		while
		\[
		SP_me_k=a_mP_me_k.
		\]
		Hence
		\[
		\left(a_m-\frac1k\right)P_me_k=0.
		\]
		Since $P_me_k\neq0$, we obtain
		\[
		a_m=\frac1k.
		\]
		As $k\in\mathbb N^*$ was arbitrary, the sequence $\{a_n\}$ must contain every number
		\[
		1,\frac12,\frac13,\ldots.
		\]
		In particular, $\{a_n\}$ must contain infinitely many distinct positive terms.
		
		On the other hand, since $Sf_1=0$ and
		\[
		f_1=\sum_{n=1}^\infty P_nf_1\neq0,
		\]
		there exists some $m_0$ such that
		\[
		P_{m_0}f_1\neq0.
		\]
		Then
		\[
		SP_{m_0}f_1=P_{m_0}Sf_1=0,
		\]
		whereas
		\[
		SP_{m_0}f_1=a_{m_0}P_{m_0}f_1.
		\]
		Since $P_{m_0}f_1\neq0$, it follows that
		\[
		a_{m_0}=0.
		\]
		Because $\{a_n\}$ is non-increasing and $a_n\geqslant0$, we therefore have
		\[
		0\leqslant a_n\leqslant a_{m_0}=0,
		\qquad n\geqslant m_0,
		\]
		and hence
		\[
		a_n=0,
		\qquad n\geqslant m_0.
		\]
		Thus only finitely many terms of the sequence $\{a_n\}$ can be positive. This contradicts the fact proved above that $\{a_n\}$ must contain the infinitely many distinct positive numbers
		\[
		1,\frac12,\frac13,\ldots.
		\]
		Therefore no such non-increasing sequence $\{a_n\}$ and mutually orthogonal projections $\{P_n\}$ can exist. Hence
		\[
		S\neq\sum_{n=1}^\infty a_nP_n
		\]
		for every non-increasing sequence $\{a_n\}_{n=1}^\infty$ of nonnegative numbers and every family of mutually orthogonal projections $\{P_n\}_{n=1}^\infty$ satisfying $\sum_{n=1}^\infty P_n=I$.
		
		Since $S\geqslant0$, we have $|S|=S$, and consequently the representation
		\[
		|S|=\sum_{n=1}^\infty a_nP_n
		\]
		is impossible as well. Therefore, although $S\in\beta(H)_+$, it cannot be represented in the $\mathbb N$-indexed form asserted in the original structure theorem. This shows that the $\mathbb N$-indexed formulation is not sufficiently general and cannot describe all operators in $\beta(H)_+$. The correct formulation must instead be expressed in terms of the reverse well-ordering of the point spectrum.
	\end{example}
	
	The difficulty illustrated above is intrinsic, not an artifact of the diagonal example. The source of the problem is that the descending enumeration of a reverse well-ordered set may have an ordinal type exceeding $\omega$ when an eigenvalue occurs after infinitely many larger eigenvalues. This is exactly what happens in Example~\ref{ex:counter-structure}: the positive eigenvalues satisfy $1>1/2>1/3>\cdots\downarrow0$, while $0$ is also an eigenvalue.
	
	The correct formulation of the structure theorem is as follows. We first record the elementary criterion for membership in $\beta(H)_+$.
	
	\begin{proposition}[Reverse well-ordered spectral criterion]\label{prop:reverse}
		Let $S\geqslant0$ have pure point spectrum. If $\sigma_p(S)$ is reverse well-ordered, then $S\in\beta(H)_+$.
	\end{proposition}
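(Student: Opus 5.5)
The plan is to generalize the direct argument already carried out for the diagonal operator in Example~\ref{ex:counter-structure}. Fix a non-zero $M\in\mathcal{R}_S$. By Lemma~\ref{lem:reducing-spectral}, $P_M$ commutes with every spectral projection $E_S(\{\lambda\})$, $\lambda\in\sigma_p(S)$. Set $M_\lambda:=E_S(\{\lambda\})M=\ker(S-\lambda I)\cap M$. These are mutually orthogonal closed subspaces of $M$.

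The first step is to show that $M=\overline{\bigoplus_{\lambda}M_\lambda}$. Because $S$ has pure point spectrum, $\sum_{\lambda\in\sigma_p(S)}E_S(\{\lambda\})=I$ in the strong operator topology. For $x\in M$ this gives
\[
x=\sum_\lambda E_S(\{\lambda\})x=\sum_\lambda E_S(\{\lambda\})P_Mx=\sum_\lambda P_ME_S(\{\lambda\})x,
\]
and each summand lies in $M_\lambda$. The index set is countable by Lemma~\ref{lem:rwo-facts}, so the sum is an ordinary countable orthogonal sum.

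Next, let $\Lambda_M:=\{\lambda\in\sigma_p(S):M_\lambda\neq\{0\}\}$. This set is non-empty because $M\neq\{0\}$ and $M$ is the closed span of the $M_\lambda$. Since $\sigma_p(S)$ is reverse well-ordered, $\Lambda_M$ has a greatest element $\lambda_*$.

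Now pick a unit vector $x_0\in M_{\lambda_*}$; then $\|Sx_0\|=\lambda_*$. For an arbitrary unit vector $x\in M$, write $x=\sum_{\lambda\in\Lambda_M}x_\lambda$ with $x_\lambda\in M_\lambda$. Orthogonality and $0\le\lambda\le\lambda_*$ (using $S\ge0$) give
\[
\|Sx\|^2=\sum_{\lambda\in\Lambda_M}\lambda^2\|x_\lambda\|^2\le\lambda_*^2.
\]
Hence $\|S|_M\|=\lambda_*=\|Sx_0\|$, so $S|_M$ attains its norm. As $M$ was arbitrary, $S\in\beta(H)$, and $S\ge0$ gives $S\in\beta(H)_+$. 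Equivalently, one could note that $S|_M$ is a positive operator with $\|S|_M\|$ an eigenvalue and invoke Proposition~\ref{prop:NA}(2).

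The only step needing care is the decomposition of $M$, namely that the compressed eigenspaces $M_\lambda$ exhaust $M$. This depends on combining pure point spectrum with the commutation from Lemma~\ref{lem:reducing-spectral}. Once that decomposition is in place, the rest is the same estimate as in the example.
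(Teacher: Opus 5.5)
Your proposal is correct and follows essentially the same route as the paper's proof. Both use Lemma~\ref{lem:reducing-spectral} to split $M$ into the pieces $\ker(S-\lambda I)\cap M$, take $\lambda_*=\max\Lambda_M$ by reverse well-ordering, and bound $\|Sy\|^2\leqslant\lambda_*^2$ using orthogonality. You actually spell out the decomposition step via $\sum_\lambda E_S(\{\lambda\})=I$ more carefully than the paper, which simply asserts it.
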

	
	\begin{proof}
		Take an arbitrary $M\in\mathcal{R}_S$. By Lemma~\ref{lem:reducing-spectral}, $M$ reduces every spectral projection of $S$. Since $S$ has pure point spectrum, it follows that $M$ is the orthogonal direct sum of the eigenspaces of $S$ cut out by $M$:
		\[
		M=\overline{\bigoplus_{\lambda\in\Lambda_M}\bigl(\ker(S-\lambda I)\cap M\bigr)},
		\qquad
		\Lambda_M:=\{\lambda\in\sigma_p(S):\ker(S-\lambda I)\cap M\neq\{0\}\}.
		\]
		As $M\neq\{0\}$, the set $\Lambda_M$ is non-empty. By the reverse well-orderedness of $\sigma_p(S)$, the maximum
		\[
		\lambda_*:=\max\Lambda_M
		\]
		exists. Pick a unit vector $x\in\ker(S-\lambda_* I)\cap M$. Then $Sx=\lambda_*x$ and hence $\|Sx\|=\lambda_*$. For any unit vector $y\in M$, writing $y=\sum_{\lambda\in\Lambda_M}y_\lambda$ with $y_\lambda\in\ker(S-\lambda I)$, orthogonality gives
		\[
		\|Sy\|^2=\sum_{\lambda\in\Lambda_M}\lambda^2\|y_\lambda\|^2
		\leqslant\lambda_*^2\sum_{\lambda\in\Lambda_M}\|y_\lambda\|^2
		=\lambda_*^2.
		\]
		Thus $\|S|_M\|=\lambda_*$ is attained at $x$. Since $M\in\mathcal{R}_S$ was arbitrary, $S\in\beta(H)_+$.
	\end{proof}
	
	We can now prove the corrected structure theorem for the class $\beta_0(H)$.
	
	\begin{theorem}[Corrected structure theorem for $\beta_0(H)$]\label{thm:structure-corrected}
		Let $T\in\mathcal{B}(H)$. Then $T\in\beta_0(H)$ if and only if $|T|$ has pure point spectrum and $\sigma_p(|T|)$ is reverse well-ordered. In this case,
		\[
		|T|=\sum_{\lambda\in\sigma_p(|T|)}\lambda\,E_{|T|}(\{\lambda\}),
		\]
		where $E_{|T|}(\{\lambda\})$ is the orthogonal projection onto $\ker(|T|-\lambda I)$, and the sum converges in the strong operator topology.
	\end{theorem}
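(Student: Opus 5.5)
The plan is to prove the two implications separately and to work throughout with the positive operator $S:=|T|$. By Definition~\ref{def:beta0}, $T\in\beta_0(H)$ means exactly $S\in\beta(H)_+$. So the theorem reduces to the following claim: a positive $S$ lies in $\beta(H)$ if and only if it has pure point spectrum and $\sigma_p(S)$ is reverse well-ordered. The direction ``$\Leftarrow$'' is exactly Proposition~\ref{prop:reverse} applied to $S=|T|$, so nothing new is needed there.

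For ``$\Rightarrow$'' I first prove that $S$ has pure point spectrum. Let $H_p:=\overline{\operatorname{span}}\{\ker(S-\lambda I):\lambda\in\sigma_p(S)\}$ and $M:=H_p^\perp$. Each eigenspace is invariant under the self-adjoint operator $S$, so $H_p$ is invariant, and hence $H_p$ reduces $S$; therefore $M\in\mathcal R_S$. Suppose $M\neq\{0\}$. Then $S|_M$ is a positive operator on $M$, and by hypothesis it is norm attaining. Proposition~\ref{prop:NA}(2), applied on the Hilbert space $M$, gives a unit vector $x\in M$ with $Sx=\|S|_M\|x$. (This also covers the case $S|_M=0$, where every vector of $M$ is a $0$-eigenvector.) So $x$ is an eigenvector of $S$, which forces $x\in H_p\cap M=\{0\}$, a contradiction. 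Hence $H_p=H$.

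Next I show that $\sigma_p(S)$ is reverse well-ordered. Suppose some nonempty $\Lambda\subseteq\sigma_p(S)$ has no greatest element, and put $s:=\sup\Lambda$, so that $s\notin\Lambda$. Let $M:=\overline{\bigoplus_{\lambda\in\Lambda}\ker(S-\lambda I)}$. As before, $M$ reduces $S$.

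1. Decomposing $y\in M$ into its orthogonal eigencomponents gives $\|Sy\|\le s\|y\|$. Testing on eigenvectors with eigenvalues in $\Lambda$ tending to $s$ shows $\|S|_M\|=s$.
2. Since $S\in\beta(H)$, Proposition~\ref{prop:NA}(2) yields a unit vector $x\in M$ with $Sx=sx$, that is, $x\in\ker(S-sI)$.
3. Because $s\notin\Lambda$, the eigenspace $\ker(S-sI)$ is orthogonal to each $\ker(S-\lambda I)$ with $\lambda\in\Lambda$, and hence to $M$. This forces $x=0$, a contradiction.

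Finally, the displayed representation follows from pure point spectrum. Each $E_S(\{\lambda\})$ is the projection onto $\ker(S-\lambda I)$, these projections are mutually orthogonal, and they sum strongly to $I$ because $H_p=H$. The index set $\sigma_p(S)$ is countable by Lemma~\ref{lem:rwo-facts}, or simply by separability of $H$. So $S=\sum_\lambda\lambda E_S(\{\lambda\})$ converges strongly, using the boundedness of $S$ and orthogonality.

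The main point requiring care is the reducing-subspace bookkeeping. One must check that the closed spans used above genuinely reduce $S$, and that an eigenvector of the restriction $S|_M$ is an eigenvector of $S$ itself. Both are routine for self-adjoint $S$, and Lemma~\ref{lem:reducing-spectral} is available if needed.
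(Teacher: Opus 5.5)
Your proposal is correct and follows the paper's proof essentially step for step. It uses the same $H_p^\perp$ argument with Proposition~\ref{prop:NA}(2) to get pure point spectrum, a reducing subspace spanned by eigenvectors whose eigenvalues approach $s=\sup\Lambda$ to force reverse well-ordering, and Proposition~\ref{prop:reverse} for sufficiency. The differences are minor: you take the closed span of all eigenspaces over $\Lambda$ rather than extracting a strictly increasing sequence of single eigenvectors, and you also justify the strong convergence of the spectral sum, which the paper leaves implicit.
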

	
	\begin{proof}
		\emph{Necessity.} Suppose $T\in\beta_0(H)$. Then, by definition, $|T|\in\beta(H)$. We prove that $|T|$ has pure point spectrum and that $\sigma_p(|T|)$ is reverse well-ordered.
		
		Write $S:=|T|\geqslant0$ and define
		\[
		H_p:=\overline{\operatorname{span}}\{\ker(S-\lambda I):\lambda\in\sigma_p(S)\},
		\qquad H_c:=H_p^\perp.
		\]
		Since $S$ is self-adjoint, $H_p$ and $H_c$ are reducing subspaces for $S$, and $S|_{H_c}$ has no eigenvalues. If $H_c\neq\{0\}$, then $\|S|_{H_c}\|>0$: otherwise $S|_{H_c}=0$, so every nonzero vector in $H_c$ would be an eigenvector corresponding to $0$, contradicting the definition of $H_c$. Therefore, since $S|_{H_c}\geqslant0$, Proposition~\ref{prop:NA}(2) implies that $S|_{H_c}$ does not attain its norm. This contradicts $S\in\beta(H)$. Hence $H_c=\{0\}$, and $S$ has pure point spectrum.
		
		Now assume, for contradiction, that $\sigma_p(S)$ is not reverse well-ordered. Then there exists a non-empty subset of $\sigma_p(S)$ without a greatest element. Choosing recursively from this subset gives pairwise distinct eigenvalues
		\[
		\lambda_1<\lambda_2<\cdots
		\]
		and, since $S$ is bounded and self-adjoint, $\lambda_n\nearrow s$ for some $s\in\mathbb{R}$. Choose unit eigenvectors $x_n$ with $Sx_n=\lambda_nx_n$. Since distinct eigenspaces are orthogonal, the subspace $M:=\overline{\operatorname{span}}\{x_n:n\geqslant1\}$ is reducing for $S$, and
		\[
		\sigma_p(S|_M)=\{\lambda_n:n\geqslant1\}.
		\]
		In particular, $s\notin\sigma_p(S|_M)$, because the only eigenvalues of $S|_M$ are the selected $\lambda_n$. On the other hand,
		\[
		\|S|_M\|=\sup_{n\geqslant1}\lambda_n=s.
		\]
		Hence $S|_M$ does not attain its norm, contradicting $S\in\beta(H)$. Therefore $\sigma_p(S)$ is reverse well-ordered.
		
		\emph{Sufficiency.} Suppose $|T|$ has pure point spectrum and $\sigma_p(|T|)$ is reverse well-ordered. By Proposition~\ref{prop:reverse}, $|T|\in\beta(H)_+$, hence $|T|\in\beta(H)$, and therefore $T\in\beta_0(H)$.
	\end{proof}
	
	Several remarks are in order.
	
	\begin{remark}\label{rem:compatibility}
		The original $\mathbb{N}$-indexed formulation in \cite[Theorem 4.6]{Ramesh2021} is recovered as a special case. If $\sigma_p(|T|)$ is a reverse well-ordered set of order type at most $\omega$, then its elements can be enumerated as a non-increasing sequence $a_1\geqslant a_2\geqslant\cdots$, and one recovers the representation $|T|=\sum_{n=1}^\infty a_nP_n$. This happens, for instance, when the decreasing enumeration of the eigenvalues has no accumulation point inside $\sigma_p(|T|)$, or when the only such accumulation point does not belong to $\sigma_p(|T|)$. The reverse well-ordered formulation is exactly the general statement.
	\end{remark}
	
	\begin{remark}\label{rem:limit-modulus}
		For $T\in\beta_0(H)$, let $c:=\inf\sigma_p(|T|)$. This quantity, the limit modulus, is independent of any particular enumeration of the eigenvalues. It need not be an eigenvalue, as Example~\ref{ex:1} illustrates. It is the natural replacement of $\lim_{n\to\infty}a_n$ in the original sequence formulation.
	\end{remark}
	
	\begin{remark}\label{rem:residual}
		The residual operator $D:=T^*T-c^2I\geqslant0$ also has pure point spectrum, since $|T|$ does, and
		\[
		\sigma_p(D)=\{\lambda^2-c^2:\lambda\in\sigma_p(|T|)\}.
		\]
		Moreover, since $\sigma_p(|T|)$ is reverse well-ordered and the map $\lambda\mapsto\lambda^2-c^2$ is strictly increasing on $[c,\infty)$, the point spectrum $\sigma_p(D)$ is also reverse well-ordered. This fact will be used crucially in the proof of the finite-rank perturbation theorem in Section~4.
	\end{remark}
	
	For the remainder of the paper, we always use the corrected structure theorem (Theorem~\ref{thm:structure-corrected}) and the auxiliary quantities introduced in Definition~\ref{def:auxiliary}, i.e. $c=\inf\sigma_p(|T|)$, $D=T^*T-c^2I$, $R=T^*K+K^*T+K^*K$, and $L=D+R$.
	
	\section{Finite-rank perturbations}\label{sec4}
	\setcounter{equation}{0}
	
	We begin with the strongest positive result: $\beta_0(H)$ is invariant under finite-rank perturbations. We first establish the spectral fact needed for the proof.
	
	\begin{lemma}[Rank-one gap lemma]\label{lem:rank-one-gap}
		Let $S=S^*\in\mathcal{B}(H)$ and let $J=(a,b)\subseteq\rho(S)$ be an open interval in the resolvent set. If $R=R^*$ has rank one, then $S+R$ has at most one spectral point in $J$.
	\end{lemma}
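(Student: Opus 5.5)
Write $R=\alpha\,\langle\cdot,u\rangle u$ with $\|u\|=1$ and $\alpha\in\mathbb{R}\setminus\{0\}$. I will argue by contradiction: suppose $S+R$ has two distinct spectral points $\mu_1<\mu_2$ in $J$. Since $J\subseteq\rho(S)$, the essential spectrum of $S$ misses $J$. Essential spectrum is invariant under the compact perturbation $R$, so $\sigma(S+R)\cap J$ consists of isolated eigenvalues of finite multiplicity. The task is therefore to show that $S+R$ has at most one eigenvalue in $J$, counted with multiplicity.

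The cleanest route is a min--max/codimension argument through the spectral projection. Let $P$ be the spectral projection of $S+R$ onto $[\mu_1,\mu_2]$; its range $V$ has dimension at least $2$. Take a two-dimensional subspace $W\subseteq V$ spanned by eigenvectors for $\mu_1$ and $\mu_2$ (or two orthonormal eigenvectors if $\mu_1=\mu_2$ has multiplicity $\geqslant2$). Then $W\cap u^\perp$ contains a unit vector $x$, and on $u^\perp$ we have $Rx=0$, so $\langle Sx,x\rangle=\langle (S+R)x,x\rangle\in[\mu_1,\mu_2]$.

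Counting on the $S$ side: let $E_-=E_S((-\infty,a])$ and $E_+=E_S([b,\infty))$, so that $E_-+E_+=I$ because $J\subseteq\rho(S)$. The key is to exploit the more refined fact that, for $\lambda\in J$, $S+R-\lambda$ restricted to $u^\perp$ is sandwiched by $S-\lambda$. Concretely I would use the standard resolvent identity: $\lambda\in J$ is an eigenvalue of $S+R$ iff $1+\alpha\langle (S-\lambda)^{-1}u,u\rangle=0$, with eigenvector $(S-\lambda)^{-1}u$. The function
\[
F(\lambda)=\langle (S-\lambda)^{-1}u,u\rangle=\int\frac{d\langle E_S(t)u,u\rangle}{t-\lambda}
\]
is real-analytic on $J$ with $F'(\lambda)=\|(S-\lambda)^{-1}u\|^2>0$. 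It is therefore strictly increasing, so $1+\alpha F$ has at most one zero on $J$. Moreover each eigenspace in $J$ is one-dimensional, since it is spanned by $(S-\lambda)^{-1}u$. I will verify this rigorously: if $(S+R)x=\lambda x$, then $(S-\lambda)x=-\alpha\langle x,u\rangle u$. If $\langle x,u\rangle=0$, then $x=0$ because $S-\lambda$ is invertible. Otherwise $x$ is a multiple of $(S-\lambda)^{-1}u$, and pairing with $u$ gives the scalar equation.

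So the plan uses the Birman--Schwinger/Aronszajn--Krein function rather than min--max. Combined with the fact that the spectrum in $J$ is discrete (from essential spectrum invariance, as noted above), this shows the spectrum of $S+R$ in $J$ is the zero set of a strictly monotone function, hence has at most one point. The main subtlety is justifying that every spectral point in $J$ is an eigenvalue. Instead of essential-spectrum theory, I could directly note that for $\lambda\in J$ with $1+\alpha F(\lambda)\neq0$ the Sherman--Morrison formula
\[
(S+R-\lambda)^{-1}=(S-\lambda)^{-1}-\frac{\alpha\,\langle\cdot,(S-\lambda)^{-1}u\rangle\,(S-\lambda)^{-1}u}{1+\alpha F(\lambda)}
\]
gives an explicit bounded inverse. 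This self-contained route is the one I would write.
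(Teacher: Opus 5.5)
Your final argument is correct, and its core matches the paper's proof. Any eigenvector of $S+R$ at $\lambda\in J$ is forced to be a multiple of $(S-\lambda I)^{-1}u$, which reduces everything to the scalar equation $1+\alpha F(\lambda)=0$. Then $F'(\lambda)=\|(S-\lambda I)^{-1}u\|^2>0$ allows at most one root.

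Where you genuinely differ is in showing that every spectral point of $S+R$ in $J$ is an eigenvalue. The paper gets this from Weyl's theorem, $\sigma_{\mathrm{ess}}(S+R)=\sigma_{\mathrm{ess}}(S)$, together with the fact that the spectrum of a self-adjoint operator off its essential spectrum consists of isolated eigenvalues of finite multiplicity. You instead exhibit the Sherman--Morrison inverse of $S+R-\lambda I$ whenever $1+\alpha F(\lambda)\neq0$. Your formula is right; it uses that $(S-\lambda I)^{-1}$ is self-adjoint for real $\lambda$.

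Your route is entirely elementary. It also gives a sharper conclusion: $\sigma(S+R)\cap J=\{\lambda\in J:1+\alpha F(\lambda)=0\}$, and any such eigenvalue is simple. The paper's route is shorter on the page because it reuses Weyl's theorem, which it needs anyway in Lemma~\ref{lem:finite-rank-stability}.

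In the write-up, delete the opening min--max paragraph. As stated it yields no contradiction: a unit vector with spectral mass on both sides of the gap can have $\langle Sx,x\rangle\in(a,b)$ even though $J\subseteq\rho(S)$. A variational proof does work for bounded $J$ if you use the form of $(S-aI)(S-bI)$ instead. That form is nonnegative, and it agrees on $u^\perp$ with the form of $(S+R-aI)(S+R-bI)$. The latter is negative definite on the spectral subspace of $S+R$ for $J$, which would be at least two-dimensional and hence meet $u^\perp$ nontrivially.
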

	
	\begin{proof}
		There are $\gamma\in\mathbb{R}$ and a unit vector $u\in H$ such that
		\[
		R=\gamma(u\otimes u),\qquad (u\otimes u)x:=\langle x,u\rangle u.
		\]
		If $\gamma=0$, the conclusion is immediate. For $\lambda\in J$, since $S-\lambda I$ is invertible, the equation
		\[
		(S+R-\lambda I)x=0
		\]
		forces
		\[
		x=-\gamma\langle x,u\rangle(S-\lambda I)^{-1}u.
		\]
		A nonzero solution must satisfy $\langle x,u\rangle\neq0$, and taking the inner product with $u$ gives
		\[
		1+\gamma\,m(\lambda)=0,
		\qquad
		m(\lambda):=\langle(S-\lambda I)^{-1}u,u\rangle.
		\]
		Conversely, any solution of this scalar equation produces a nonzero eigenvector $(S-\lambda I)^{-1}u$.
		Since $J\subseteq\rho(S)$ and $S=S^*$, the resolvent $(S-\lambda I)^{-1}$ is self-adjoint for real $\lambda\in J$, so $m$ is real-valued and differentiable on $J$, and
		\[
		m'(\lambda)=\langle(S-\lambda I)^{-2}u,u\rangle
		=\|(S-\lambda I)^{-1}u\|^2>0.
		\]
		Thus $m$ is strictly increasing on $J$, so the scalar equation has at most one solution. Moreover, $R$ is compact, and Weyl's theorem gives
		\[
		\sigma_{\mathrm{ess}}(S+R)=\sigma_{\mathrm{ess}}(S).
		\]
		Since $J\subseteq\rho(S)$, we have $J\cap\sigma_{\mathrm{ess}}(S+R)=\varnothing$. Hence every spectral point of the self-adjoint operator $S+R$ lying in $J$ is an isolated eigenvalue of finite multiplicity. Therefore the scalar equation above accounts for every spectral point in $J$, and $S+R$ has at most one spectral point in $J$.
	\end{proof}
	
	\begin{lemma}[Finite-rank spectral stability]\label{lem:finite-rank-stability}
		Let $S=S^*\in\mathcal{B}(H)$ have pure point spectrum and suppose that $\sigma_p(S)$ is reverse well-ordered. If $R=R^*$ has finite rank, then $S+R$ has pure point spectrum and $\sigma_p(S+R)$ is reverse well-ordered.
	\end{lemma}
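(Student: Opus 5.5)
The plan is to reduce to the rank-one case and then prove the two conclusions separately: pure point spectrum via countability of the spectrum, and reverse well-ordering via the gap lemma. Write $R=\sum_{i=1}^{k}\gamma_i\,(u_i\otimes u_i)$ with $\gamma_i\in\mathbb{R}$ and orthonormal $u_i$, using the spectral decomposition of the finite-rank self-adjoint operator $R$. Set $S_0:=S$ and $S_j:=S_{j-1}+\gamma_j(u_j\otimes u_j)$. Suppose we know that a rank-one self-adjoint perturbation preserves the two properties "pure point spectrum" and "reverse well-ordered point spectrum". Then induction on $j$ yields the claim for $S+R=S_k$. So from now on assume that $R$ has rank one.

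\emph{Pure point spectrum.} Since $S$ has pure point spectrum, $\sigma(S)=\overline{\sigma_p(S)}$. By Lemma~\ref{lem:rwo-facts} this closure is countable and again reverse well-ordered. By Weyl's theorem, $\sigma_{\mathrm{ess}}(S+R)=\sigma_{\mathrm{ess}}(S)\subseteq\sigma(S)$, which is countable. The remaining part $\sigma(S+R)\setminus\sigma_{\mathrm{ess}}(S+R)$ consists of isolated eigenvalues of finite multiplicity, so it is also countable. Hence $\sigma(S+R)$ is a countable compact set. For any vector $x$, the scalar spectral measure $\langle E_{S+R}(\cdot)x,x\rangle$ is supported on this countable set, so it is purely atomic. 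Therefore $E_{S+R}$ is concentrated on $\sigma_p(S+R)$, and $H$ is the closed span of the eigenspaces of $S+R$. This shows that $S+R$ has pure point spectrum.

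\emph{Reverse well-ordering.} Suppose, for contradiction, that $\mu_1<\mu_2<\cdots$ are eigenvalues of $S+R$, and let $\mu_n\uparrow s$. Consider the set $\Lambda:=\sigma(S)\cap(-\infty,s)$. If it is non-empty, it has a greatest element $a<s$, because $\sigma(S)=\overline{\sigma_p(S)}$ is reverse well-ordered. If it is empty, put $a:=-\|S\|-1$. In both cases the interval $J=(a,s)$ is disjoint from $\sigma(S)$, so $J\subseteq\rho(S)$. By the rank-one gap lemma (Lemma~\ref{lem:rank-one-gap}), $S+R$ has at most one spectral point in $J$. On the other hand, $\mu_n\in J$ for all large $n$, which gives infinitely many such points. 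This contradiction shows that $\sigma_p(S+R)$ contains no strictly increasing sequence, and hence is reverse well-ordered.

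The main obstacle I expect is the pure point step. Finite-rank perturbations can in general destroy pure point spectrum, so the argument depends essentially on the countability of $\sigma(S)$, which comes from reverse well-ordering. One has to check two things carefully there. First, that $\sigma(S)=\overline{\sigma_p(S)}$ holds for pure point $S$: the spectral measure lives on the eigenvalues, and its support is the closure of that set. Second, that a self-adjoint operator with countable spectrum has pure point spectrum: each atom $\{\lambda\}$ with $E(\{\lambda\})\neq0$ is an eigenvalue, and by countable additivity $\sum_\lambda E(\{\lambda\})=I$. The reverse well-ordering step is then a direct application of Lemma~\ref{lem:rank-one-gap}, whose hypothesis requires exactly an open gap in $\rho(S)$ immediately to the left of the limit point $s$.
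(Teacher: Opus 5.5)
Your proof is correct. Its pure-point step is the paper's argument: countability of $\sigma(S)=\overline{\sigma_p(S)}$, Weyl's theorem, and atomicity of a spectral measure carried by a countable set.

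The difference is in how you handle finite rank.

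\emph{The paper's route.} It fixes one gap $(a,\mu)\subseteq\rho(S)$, determined by the original $S$ and the limit $\mu$ of the putative increasing eigenvalues of $S+R$. It then inducts over the rank-one pieces $S_j=S_{j-1}+\gamma_j(u_j\otimes u_j)$ with the weaker inductive statement ``$S_j$ has only finitely many spectral points in $(a,\mu)$''. This forces it to track the isolated eigenvalues $\nu_i$ created at earlier stages and to enclose them in intervals $(\ell_i,r_i)$. It then applies Lemma~\ref{lem:rank-one-gap} separately to the complementary intervals, to neighbourhoods of the endpoints, and to the two sides of each $\nu_i$.

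\emph{Your route.} You observe that the lemma's conclusion reproduces its hypothesis, so you induct on the full statement and only ever need the rank-one case. At each stage you re-choose the gap for the current operator $S_{j-1}$, and a single application of Lemma~\ref{lem:rank-one-gap} finishes. Taking $\Lambda=\sigma(S)\cap(-\infty,s)$ and using the reverse well-ordering of $\overline{\sigma_p(S)}$ (established in the proof of Lemma~\ref{lem:rwo-facts}) is also a small shortcut. It replaces the paper's passage from $\sigma_p(S)$ to $(a,\mu)\subseteq\rho(S)$.

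\emph{What each buys.} Your route is shorter and avoids all the interval bookkeeping. The paper's route never needs the intermediate $S_j$ to have pure point spectrum. In effect it bounds the number of spectral points of $S+R$ in a fixed gap of $S$ by a quantity depending only on $\operatorname{rank}R$.

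\emph{One small fix.} When $\Lambda=\varnothing$, your choice $a=-\|S\|-1$ requires $a<s$. This does hold: $s$ is an accumulation point of $\sigma(S+R)$, so it lies in $\sigma_{\mathrm{ess}}(S+R)=\sigma_{\mathrm{ess}}(S)\subseteq[-\|S\|,\|S\|]$. It is simpler, though, to take any $a<s$, since then $(a,s)\cap\sigma(S)=\varnothing$ automatically.
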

	
	\begin{proof}
		Put $A:=S+R$. We first prove that $A$ has pure point spectrum. Since $S$ has pure point spectrum,
		\[
		\sigma(S)=\overline{\sigma_p(S)}.
		\]
		By Lemma~\ref{lem:rwo-facts}, $\overline{\sigma_p(S)}$ is countable. Since $R$ is compact, Weyl's theorem on invariance of the essential spectrum under compact perturbations gives
		\[
		\sigma_{\mathrm{ess}}(A)=\sigma_{\mathrm{ess}}(S)\subseteq\sigma(S),
		\]
		so $\sigma_{\mathrm{ess}}(A)$ is countable. Every point of $\sigma(A)\setminus\sigma_{\mathrm{ess}}(A)$ is an isolated eigenvalue of finite multiplicity. The corresponding eigenspaces are mutually orthogonal, and $H$ is separable, so there are at most countably many such eigenvalues. Therefore $\sigma(A)$ is countable. Since a projection-valued measure on a countable set is the strong sum of its singleton projections, the spectral measure of $A$ is purely atomic. Hence $A$ has pure point spectrum.
		
		It remains to prove reverse well-ordering. Suppose, to the contrary, that $\sigma_p(A)$ is not reverse well-ordered. Then there are pairwise distinct eigenvalues
		\[
		\mu_1<\mu_2<\cdots\nearrow\mu.
		\]
		Thus $\mu$ is a limit point of $\sigma(A)$. For a self-adjoint operator, every spectral point outside the essential spectrum is an isolated eigenvalue of finite multiplicity. Hence
		\[
		\mu\in\sigma_{\mathrm{ess}}(A)=\sigma_{\mathrm{ess}}(S).
		\]
		
		Consider
		\[
		\Lambda:=\sigma_p(S)\cap(-\infty,\mu).
		\]
		If $\Lambda=\varnothing$, then $\sigma_p(S)\cap(-\infty,\mu)=\varnothing$ and we choose any $a<\inf\sigma(S)$ with $a<\mu$. If $\Lambda\neq\varnothing$, reverse well-ordering gives a greatest element
		\[
		a:=\max\Lambda.
		\]
		In either case,
		\[
		(a,\mu)\cap\sigma_p(S)=\varnothing.
		\]
		Since $S$ has pure point spectrum, $\sigma(S)=\overline{\sigma_p(S)}$. If some $\lambda\in(a,\mu)$ belonged to $\sigma(S)$, then every neighborhood of $\lambda$ would meet $\sigma_p(S)$. Choosing a sufficiently small neighborhood contained in $(a,\mu)$ would contradict
		\[
		(a,\mu)\cap\sigma_p(S)=\varnothing.
		\]
		Hence
		\[
		(a,\mu)\subseteq\rho(S).
		\]
		
		If $R=0$, there is nothing to prove in the following argument. Otherwise, by the finite-dimensional spectral theorem, there are orthonormal vectors $u_1,\dots,u_r$ and nonzero real numbers $\gamma_1,\dots,\gamma_r$ such that
		\[
		R=\sum_{j=1}^r\gamma_j(u_j\otimes u_j).
		\]
		Set $R_j:=\gamma_j(u_j\otimes u_j)$ and $S_0:=S$, $S_j:=S_{j-1}+R_j$. We claim inductively that $S_j$ has only finitely many spectral points in $(a,\mu)$. This is true for $j=0$. Suppose it is true for $j-1$, and let $\nu_1,\dots,\nu_m$ be all spectral points of $S_{j-1}$ in $(a,\mu)$. Since $S_{j-1}-S$ has finite rank, Weyl's theorem gives $\sigma_{\mathrm{ess}}(S_{j-1})=\sigma_{\mathrm{ess}}(S)$. Because $(a,\mu)\subseteq\rho(S)$, we have $(a,\mu)\cap\sigma_{\mathrm{ess}}(S_{j-1})=\varnothing$. Thus each $\nu_i$ is an isolated eigenvalue of finite multiplicity. Choose pairwise disjoint open intervals $(\ell_i,r_i)\subset(a,\mu)$ around the $\nu_i$ such that $\sigma(S_{j-1})\cap(\ell_i,r_i)=\{\nu_i\}$. The complement
		\[
		(a,\mu)\setminus\bigcup_{i=1}^m[\ell_i,r_i]
		\]
		is a finite union of open intervals contained in $\rho(S_{j-1})$. By Lemma~\ref{lem:rank-one-gap}, each such interval contains at most one spectral point of $S_j$. Moreover, each endpoint $\ell_i$ and $r_i$ belongs to $\rho(S_{j-1})$. Since the resolvent set is open, each endpoint has a small open neighborhood contained in $\rho(S_{j-1})$, and Lemma~\ref{lem:rank-one-gap} shows that each such neighborhood contains at most one spectral point of $S_j$. Thus the endpoints contribute only finitely many additional spectral points.
		
		For each $i$, the eigenspace of $S_j$ at $\nu_i$ is finite-dimensional. Indeed, if $x\in\ker(S_j-\nu_iI)$, then
		\[
		(S_{j-1}-\nu_iI)x=-R_jx\in\operatorname{ran}R_j.
		\]
		The kernel of the map $x\mapsto R_jx$ on $\ker(S_j-\nu_iI)$ is contained in $\ker(S_{j-1}-\nu_iI)$, which is finite-dimensional, while $\operatorname{ran}R_j$ is one-dimensional. Hence
		\[
		\dim\ker(S_j-\nu_iI)\leqslant
		\dim\ker(S_{j-1}-\nu_iI)+1<\infty.
		\]
		The two sides $(\ell_i,\nu_i)$ and $(\nu_i,r_i)$ are resolvent intervals for $S_{j-1}$, so Lemma~\ref{lem:rank-one-gap} gives at most one spectral point of $S_j$ on each side. Together with the possible spectral point at $\nu_i$ itself and the finitely many complement intervals, this yields only finitely many spectral points of $S_j$ in $(a,\mu)$. This completes the induction.
		
		Taking $j=r$ contradicts the infinitely many eigenvalues $\mu_n\in(a,\mu)$ for all sufficiently large $n$. Hence $\sigma_p(A)$ is reverse well-ordered.
	\end{proof}
	
	\begin{theorem}\label{thm:finrank}
		Let $T\in\beta_0(H)$ and let $K$ be a finite-rank operator. Then $T+K\in\beta_0(H)$. In particular, $T+K\in\beta(H)$.
	\end{theorem}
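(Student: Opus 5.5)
The plan is to reduce the statement to Lemma~\ref{lem:finite-rank-stability} applied to $S:=T^*T$ and then pass back to the modulus. By Theorem~\ref{thm:structure-corrected}, $T\in\beta_0(H)$ means that $|T|$ has pure point spectrum and $\sigma_p(|T|)$ is reverse well-ordered. We first transfer this to $T^*T=|T|^2$. Every eigenvector of $|T|$ with eigenvalue $\lambda$ is an eigenvector of $T^*T$ with eigenvalue $\lambda^2$, so $T^*T$ has pure point spectrum and
\[
\sigma_p(T^*T)=\{\lambda^2:\lambda\in\sigma_p(|T|)\}.
\]
The map $\lambda\mapsto\lambda^2$ is strictly increasing on $[0,\infty)$, so it is an order isomorphism onto its image. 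Hence $\sigma_p(T^*T)$ is reverse well-ordered, by the same argument as in Remark~\ref{rem:residual}.

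Next we expand
\[
(T+K)^*(T+K)=T^*T+R,\qquad R:=T^*K+K^*T+K^*K .
\]
Since $K$ has finite rank, so do $K^*$, $T^*K$, $K^*T$ and $K^*K$ (finite rank is a two-sided ideal). Thus $R=R^*$ has finite rank. Lemma~\ref{lem:finite-rank-stability} with $S=T^*T$ then shows that $A:=(T+K)^*(T+K)$ has pure point spectrum and that $\sigma_p(A)$ is reverse well-ordered.

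Finally we return to $|T+K|=A^{1/2}$. Since $A\geqslant0$, we have $\sigma_p(A)\subseteq[0,\infty)$. For $\mu\in\sigma_p(A)$, the functional calculus with $f(t)=\sqrt t$ gives $\ker(A-\mu I)\subseteq\ker(|T+K|-\sqrt{\mu}\,I)$. So $|T+K|$ inherits pure point spectrum, with
\[
\sigma_p(|T+K|)=\{\sqrt\mu:\mu\in\sigma_p(A)\}.
\]
The inclusion $\supseteq$ comes from the eigenvector correspondence. For $\subseteq$, note that the eigenvectors of $A$ already span $H$, so every eigenvalue of $|T+K|$ arises this way; alternatively, $|T+K|x=\nu x$ implies $Ax=\nu^2x$. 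Since $\sqrt{\cdot}$ is strictly increasing on $[0,\infty)$, this set is reverse well-ordered. The sufficiency half of Theorem~\ref{thm:structure-corrected} then gives $T+K\in\beta_0(H)$, and Proposition~\ref{thm:positive} gives $T+K\in\beta(H)$.

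All the analytic difficulty sits in Lemma~\ref{lem:finite-rank-stability}, which we are allowed to assume. The only point needing care here is checking that pure point spectrum and reverse well-ordering pass back and forth between a positive operator and its square and square root. This reduces to two facts: the spectral projections coincide under the strictly monotone reparametrization of $[0,\infty)$, and reverse well-ordering is preserved by strictly increasing maps. So no genuine obstacle is expected.
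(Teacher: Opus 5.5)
Your proposal is correct and follows essentially the same route as the paper: you write $(T+K)^*(T+K)$ as $T^*T$ plus a finite-rank self-adjoint $R$, invoke Lemma~\ref{lem:finite-rank-stability}, and carry pure point spectrum and reverse well-ordering back to $|T+K|$ through a strictly increasing square-root map. The only difference is that the paper applies the lemma to the residual operator $D=T^*T-c^2I$ and then uses $f(\lambda)=\sqrt{c^2+\lambda}$, whereas you apply it directly to $T^*T$ and use $\sqrt{t}$; since $D$ and $T^*T$ differ only by the scalar shift $c^2I$, this change is purely cosmetic, and your version has the small advantage of not needing the limit modulus $c$ at all.
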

	
	\begin{proof}
		Set $A:=T+K$. By Definition~\ref{def:auxiliary},
		\[
		A^*A=T^*T+T^*K+K^*T+K^*K=c^2I+D+R=c^2I+L,
		\]
		where $R$ is finite-rank and self-adjoint because $K$ is finite-rank. Since $T\in\beta_0(H)$, we have $|T|\in\beta(H)$, and Theorem~\ref{thm:structure-corrected} and Remark~\ref{rem:residual} imply that $D$ has pure point spectrum and $\sigma_p(D)$ is reverse well-ordered. Lemma~\ref{lem:finite-rank-stability}, applied to $S=D$ and $R$, shows that $L=D+R$ has pure point spectrum and $\sigma_p(L)$ is reverse well-ordered.
		
		Since $A^*A=c^2I+L\geqslant0$, we have $L\geqslant-c^2I$ and hence
		\[
		\sigma(L)\subseteq[-c^2,\infty).
		\]
		Therefore the function $f(\lambda)=\sqrt{c^2+\lambda}$ is continuous on $\sigma(L)$. By the spectral theorem,
		\[
		|A|=(c^2I+L)^{1/2}=f(L).
		\]
		Every eigenvector of $L$ is therefore an eigenvector of $|A|$, and
		\[
		\sigma_p(|A|)=\{\sqrt{c^2+\lambda}:\lambda\in\sigma_p(L)\}.
		\]
		Because $f$ is strictly increasing on $[-c^2,\infty)$, it preserves reverse well-orderedness. Hence $|A|$ has pure point spectrum and reverse well-ordered point spectrum. By Theorem~\ref{thm:structure-corrected}, $|A|\in\beta(H)$, i.e. $A=T+K\in\beta_0(H)$. By Proposition~\ref{thm:positive}, $T+K\in\beta(H)$.
	\end{proof}
	
	Theorem~\ref{thm:finrank} shows that the class $\beta_0(H)$ is robust under finite-rank perturbations. One might naturally ask whether the same holds for arbitrary compact perturbations. The following example shows that the answer is negative---even a trace-class perturbation can push an operator out of $\beta_0(H)$.
	
	\begin{example}\label{ex:3}
		Let $H=\ell^2\oplus\mathbb{C}$ be the orthogonal direct sum of $\ell^2$ and $\mathbb{C}$, and let $P_1$, $P_2$ be the orthogonal projections onto $\ell^2$ and $\mathbb{C}$ respectively; note that $P_1P_2=P_2P_1=0$ and $P_1+P_2=I$. Define $T=\sqrt{2}P_1+P_2$; then $T^*T=2P_1+P_2$ and $\sigma_p(T)=\{\sqrt{2},1\}$. Since $\sigma_p(T)$ is finite, it is reverse well-ordered, and by Theorem~\ref{thm:structure-corrected}, $T\in\beta_0(H)$.
		
		On an orthonormal basis $\{e_n\}_{n=1}^\infty$ of $P_1H=\ell^2$, define
		\[
		Ke_n=k_n e_n,\qquad k_n=-\sqrt{2}+\sqrt{2-\frac{1}{2^n}},\qquad n\in\mathbb{N}^*,
		\]
		and set $K=0$ on $P_2H=\mathbb{C}$. Since $K$ is diagonal and real with respect to the orthogonal decomposition, it is self-adjoint. Rationalizing $k_n$,
		\[
		k_n=\frac{-1}{\sqrt{2}+\sqrt{2-\frac{1}{2^n}}}\cdot\frac{1}{2^n},
		\]
		so $|k_n|\leqslant\frac{1}{\sqrt{2}\cdot2^n}$. Hence $K$ is trace-class and therefore compact.
		
		Set $A=T+K$. Since $T$ and $K$ are self-adjoint, $A$ is self-adjoint. On $P_1H$,
		\[
		Ae_n=(\sqrt{2}+k_n)e_n=\sqrt{2-\frac{1}{2^n}}\,e_n,
		\]
		and on $P_2H$, $Af=f$, where $f$ is a unit vector spanning $\mathbb{C}$. Since $P_1H\perp P_2H$, the two eigenspaces are orthogonal. Thus $A=|A|$ is a positive self-adjoint operator with
		\[
		\sigma_p(|A|)=\Bigl\{\lambda_n:=\sqrt{2-\frac{1}{2^n}}: n\in\mathbb{N}^*\Bigr\}\cup\{1\}.
		\]
		
		Consider $M:=\overline{\operatorname{span}}\{e_n:n\geqslant2\}$. Since $|A|$ is diagonal with respect to $\{e_n\}\cup\{f\}$, both $M$ and $M^\perp$ are invariant under $|A|$, so $M\in\mathcal{R}_{|A|}$. On $M$,
		\[
		\sqrt{\frac74}<\sqrt{\frac{15}{8}}<\sqrt{\frac{31}{16}}<\cdots\nearrow\sqrt{2},
		\]
		and hence
		\[
		\|\,|A||_M\,\|=\sup_{n\geqslant2}\lambda_n=\sqrt{2}.
		\]
		But $\sqrt{2}\notin\sigma_p(|A|)$, so the norm is not attained on $M$. Therefore $|A|\notin\beta(H)$, i.e. $A\notin\beta_0(H)$. By Proposition~\ref{thm:positive}, $A\notin\beta(H)$.
	\end{example}
	
	\section{Compact perturbations of $\beta_0(H)$}\label{sec5}
	\setcounter{equation}{0}
	
	In this section we keep the notations introduced in Sections~2 and~3: $T\in\beta_0(H)$ has limit modulus $c=\inf\sigma_p(|T|)$, residual operator $D=T^*T-c^2I$, and, for a compact operator $K\in\mathcal{K}(H)$, $R$ and $L$ are the interaction and total perturbation operators defined in Definition~\ref{def:auxiliary}. Example~\ref{ex:3} has already shown that $T+K\notin\beta_0(H)$ can occur even for trace-class $K$. We now determine exactly when invariance holds.
	
	\begin{theorem}\label{thm:general}
		Let $T\in\beta_0(H)$ and $K\in\mathcal{K}(H)$. Then
		\[
		T+K\in\beta_0(H)
		\iff
		\text{$L$ has pure point spectrum and }\sigma_p(L)\text{ is reverse well-ordered}.
		\]
	\end{theorem}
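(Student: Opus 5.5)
The plan is to reduce everything to the identity already used in the proof of Theorem~\ref{thm:finrank}. With $A:=T+K$ and the notation of Definition~\ref{def:auxiliary}, expanding $A^*A$ gives
\[
A^*A=T^*T+T^*K+K^*T+K^*K=c^2I+D+R=c^2I+L .
\]
Hence $|A|=(c^2I+L)^{1/2}$. By Theorem~\ref{thm:structure-corrected}, $A\in\beta_0(H)$ holds exactly when $|A|$ has pure point spectrum and $\sigma_p(|A|)$ is reverse well-ordered. So it suffices to show that these two properties pass back and forth between $|A|$ and $L$.

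First I would record that $L=A^*A-c^2I\geqslant -c^2I$, so $\sigma(L)\subseteq[-c^2,\infty)$. On this interval I take
\[
f(\lambda)=\sqrt{c^2+\lambda}.
\]
This function is continuous and strictly increasing, and $|A|=f(L)$ by the continuous functional calculus. I also take
\[
g(\mu)=\mu^2-c^2 \quad\text{on } [0,\infty).
\]
Then $g$ is continuous and strictly increasing, satisfies $g\circ f=\mathrm{id}$, and gives $L=g(|A|)$. These two relations make the argument symmetric.

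The key step is that for $h$ continuous and injective on the spectrum, $\ker(h(S)-h(\lambda)I)=\ker(S-\lambda I)$. To see this, note that $E_{h(S)}(\{h(\lambda)\})=E_S(h^{-1}(\{h(\lambda)\}))=E_S(\{\lambda\})$, using the rule $E_{f(S)}(\Delta)=E_S(f^{-1}(\Delta))$ recorded in Section~2. Moreover, $h(S)$ has no eigenvalues outside $h(\sigma_p(S))$, since an eigenvalue $\nu$ of $h(S)$ has nonzero projection $E_S(h^{-1}(\{\nu\}))$, and $h^{-1}(\{\nu\})$ is a single point of $\sigma(S)$, which must then be an atom. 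Consequently
\[
\sigma_p(|A|)=f(\sigma_p(L)),
\]
and the eigenspaces coincide. In particular $L$ has pure point spectrum iff $|A|$ does, because the closed spans of the eigenspaces are the same subspace.

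Finally, a strictly increasing map between subsets of $\mathbb{R}$ preserves and reflects reverse well-ordering: a nonempty subset has a greatest element iff its image does. Applying this with $f$ and $g$ transfers reverse well-orderedness between $\sigma_p(L)$ and $\sigma_p(|A|)$. Combined with Theorem~\ref{thm:structure-corrected}, this gives both implications. Note that compactness of $K$ is not actually needed for the equivalence; it only matters for later simplifications.

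The main point needing care is the eigenspace correspondence under the functional calculus when $c=0$ or the spectrum touches $-c^2$. Only injectivity and continuity of $f$ on $[-c^2,\infty)$ are used, so this endpoint causes no trouble.
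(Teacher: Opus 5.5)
Your proposal is correct and follows essentially the same route as the paper: both arguments rest on $A^*A=c^2I+L$ and on the functional-calculus correspondence between $|A|=(c^2I+L)^{1/2}$ and $L$ under the strictly increasing map $\lambda\mapsto\sqrt{c^2+\lambda}$ and its inverse, and both conclude with Theorem~\ref{thm:structure-corrected}. The paper writes out the spectral sums separately for each direction, while you handle both directions at once with the eigenspace-equality lemma for injective continuous functions; this is tidier but not a substantively different argument.
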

	
	\begin{proof}
		Set $A:=T+K$. Then
		\[
		A^*A=T^*T+R=c^2I+D+R=c^2I+L,\qquad |A|=(c^2I+L)^{1/2}.
		\]
		
		\emph{Necessity.} Suppose $A\in\beta_0(H)$. Then $|A|\in\beta(H)$. By Theorem~\ref{thm:structure-corrected}, $|A|$ has pure point spectrum and $\sigma_p(|A|)$ is reverse well-ordered. Let
		\[
		|A|=\sum_{\mu\in\sigma_p(|A|)}\mu E_{|A|}(\{\mu\})
		\]
		be its spectral decomposition. Squaring gives
		\[
		c^2I+L=\sum_{\mu\in\sigma_p(|A|)}\mu^2E_{|A|}(\{\mu\}),
		\]
		hence
		\[
		L=\sum_{\mu\in\sigma_p(|A|)}(\mu^2-c^2)E_{|A|}(\{\mu\}).
		\]
		It follows that $L$ has pure point spectrum and
		\[
		\sigma_p(L)=\{\mu^2-c^2:\mu\in\sigma_p(|A|)\}.
		\]
		Since the map $\mu\mapsto\mu^2-c^2$ is strictly increasing on $[0,\infty)$ and $\sigma_p(|A|)$ is reverse well-ordered, $\sigma_p(L)$ is reverse well-ordered.
		
		\emph{Sufficiency.} Suppose $L$ has pure point spectrum and $\sigma_p(L)$ is reverse well-ordered. Write
		\[
		L=\sum_{\lambda\in\sigma_p(L)}\lambda Q_\lambda,
		\]
		where $Q_\lambda$ is the orthogonal projection onto $\ker(L-\lambda I)$. From $A^*A=c^2I+L\geqslant0$, we have $c^2+\lambda\geqslant0$ for every $\lambda\in\sigma_p(L)$. Define
		\[
		\mu_\lambda:=\sqrt{c^2+\lambda},\qquad \lambda\in\sigma_p(L).
		\]
		By the functional calculus,
		\[
		|A|=(c^2I+L)^{1/2}=\sum_{\lambda\in\sigma_p(L)}\mu_\lambda Q_\lambda.
		\]
		Thus $|A|$ has pure point spectrum and $\sigma_p(|A|)=\{\mu_\lambda:\lambda\in\sigma_p(L)\}$. Conversely, if $|A|x=\mu x$ for some $x\ne0$, then
		\[
		Lx=(\mu^2-c^2)x.
		\]
		Thus the displayed equality of point spectra is exact. Since the map $\lambda\mapsto\sqrt{c^2+\lambda}$ is strictly increasing on $[-c^2,\infty)$, the set $\sigma_p(|A|)$ is reverse well-ordered. By Theorem~\ref{thm:structure-corrected}, $|A|\in\beta(H)$, i.e. $A=T+K\in\beta_0(H)$.
	\end{proof}
	
	\begin{remark}\label{rem:spectral}
		Theorem~\ref{thm:general} admits a natural interpretation via the spectral theorem
		for self-adjoint operators. Since $L$ is self-adjoint, the spectral theorem
		supplies a unique projection-valued measure $E_L$ on $\sigma(L)$ such that
		\[
		L=\int_{\sigma(L)}\lambda\,dE_L(\lambda),\qquad
		I=\int_{\sigma(L)}dE_L(\lambda).
		\]
		The condition in the theorem is precisely the requirement that the spectral
		measure $E_L$ be purely atomic and that the atoms be arranged in a reverse
		well-ordered fashion according to their eigenvalues. This condition is not
		automatic: even when $L$ is compact and has purely atomic spectrum, the point
		spectrum need not be reverse well-ordered, as Example~\ref{ex:1} below shows.
	\end{remark}
	
	\begin{remark}\label{rem:general}
		Theorem~\ref{thm:general} provides a complete characterization in full generality, but the condition ``$L$ has pure point spectrum and $\sigma_p(L)$ is reverse well-ordered'' involves the entire spectral decomposition of the total perturbation operator and may be difficult to verify directly. The next theorem shows that when the residual operator $D$ is compact, this condition admits a drastic simplification.
	\end{remark}
	
	\begin{theorem}\label{thm:main}
		Let $T\in\beta_0(H)$ and $K\in\mathcal{K}(H)$. Suppose $D$ is compact. Then
		\[
		T+K\in\beta_0(H) \iff \text{$\sigma_p(L)\cap(-\infty,0)$ is finite.}
		\]
	\end{theorem}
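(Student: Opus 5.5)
The plan is to reduce Theorem~\ref{thm:main} to Theorem~\ref{thm:general}. Compactness of $D$ makes $L$ compact, and then only the accumulation behaviour of $\sigma_p(L)$ at $0$ needs to be analysed.

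\emph{Step 1: $L$ is compact, self-adjoint, with pure point spectrum.} Since $K$ is compact and $\mathcal{K}(H)$ is an ideal, $R=T^*K+K^*T+K^*K$ is compact and self-adjoint. By hypothesis $D$ is compact, so $L=D+R$ is compact and self-adjoint. The spectral theorem for compact self-adjoint operators (cf.\ Theorem~\ref{thm:compact}) gives
\[
H=\ker L\oplus\overline{\operatorname{span}}\{\ker(L-\lambda I):\lambda\in\sigma_p(L)\setminus\{0\}\}.
\]
If $\ker L\neq\{0\}$, then $\ker L$ is itself an eigenspace. In either case the eigenvectors of $L$ span a dense subspace, so $L$ has pure point spectrum. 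By Theorem~\ref{thm:general}, it follows that
\[
T+K\in\beta_0(H)\iff\sigma_p(L)\text{ is reverse well-ordered}.
\]

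\emph{Step 2 (necessity).} Suppose $\sigma_p(L)\cap(-\infty,0)$ is infinite. By Theorem~\ref{thm:compact}, the non-zero eigenvalues can accumulate only at $0$. The negative eigenvalues are infinitely many and lie in the bounded set $[-\|L\|,0)$, so they accumulate, necessarily at $0$. Because each interval $(-\infty,-\varepsilon]$ contains only finitely many of them, this set of negative eigenvalues has no greatest element; indeed, one can extract a strictly increasing sequence $\lambda_n\nearrow0$. Hence $\sigma_p(L)$ is not reverse well-ordered, and $T+K\notin\beta_0(H)$.

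\emph{Step 3 (sufficiency).} Suppose the negative eigenvalues are finite in number, and let $\Lambda\subseteq\sigma_p(L)$ be non-empty. We show $\Lambda$ has a greatest element in three cases.
\begin{itemize}
\item If $\Lambda$ contains some $\lambda_0>0$, then $\Lambda\cap[\lambda_0,\infty)$ is finite, since for compact $L$ there are only finitely many eigenvalues $\geqslant\lambda_0$. Its maximum is then the maximum of $\Lambda$.
\item If $\Lambda\subseteq(-\infty,0]$ and $0\in\Lambda$, then $\max\Lambda=0$.
\item Otherwise $\Lambda$ is a finite set of negative numbers and has a maximum.
\end{itemize}
Thus $\sigma_p(L)$ is reverse well-ordered, and Theorem~\ref{thm:general} yields $T+K\in\beta_0(H)$.

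\emph{Main obstacle.} The only delicate point is Step~1: confirming that compactness of $L$ alone forces pure point spectrum, including the role of $\ker L$ (which may be zero or infinite-dimensional). The argument above handles both possibilities. The rest is the order-theoretic observation that $0$ is the only accumulation point of $\sigma_p(L)$. Positive eigenvalues can only approach $0$ from above, which never obstructs the existence of maxima. Negative eigenvalues would approach $0$ from below, which produces an increasing sequence with no maximum; this is exactly what finiteness of the negative eigenvalues rules out.
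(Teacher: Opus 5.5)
Your proposal is correct and follows the paper's proof essentially step for step. You use Theorem~\ref{thm:general} to reduce the claim to reverse well-ordering of $\sigma_p(L)$ for the compact self-adjoint operator $L=D+R$. You then use the fact that $0$ is the only accumulation point of $\sigma_p(L)$, so reverse well-ordering fails exactly when infinitely many negative eigenvalues increase to $0$. Your treatment of $\ker L$ and your three-case check in Step~3 simply spell out details that the paper states in a single line.
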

	
	\begin{proof}
		Set $A:=T+K$. Since $D$ and $R$ are compact self-adjoint operators, $L=D+R$ is compact and self-adjoint. By Theorem~\ref{thm:general},
		\[
		A\in\beta_0(H)
		\iff
		\text{$L$ has pure point spectrum and $\sigma_p(L)$ is reverse well-ordered.}
		\]
		The compact spectral theorem gives pure point spectrum automatically. It remains to characterize reverse well-ordering. For a compact self-adjoint operator, every nonzero spectral point is an eigenvalue of finite multiplicity and $0$ is the only possible accumulation point. In particular, the positive eigenvalues form a reverse well-ordered set. If $\sigma_p(L)\cap(-\infty,0)$ is finite, adding these finitely many negative eigenvalues and the possible eigenvalue $0$ preserves reverse well-ordering. Conversely, if $\sigma_p(L)\cap(-\infty,0)$ is infinite, compactness forces these distinct negative spectral values to converge to $0$ from below, and hence they contain a strictly increasing sequence converging to $0$. Therefore $\sigma_p(L)$ is not reverse well-ordered. The result follows.
	\end{proof}
	
	The following example shows that if the total perturbation operator has infinitely many negative eigenvalues, then $T+K\notin\beta_0(H)$, demonstrating the necessity of the finiteness condition in Theorem~\ref{thm:main}.
	
	\begin{example}\label{ex:1}
		Let $H=\ell^2$ with standard orthonormal basis $\{e_n\}_{n=1}^\infty$. Define
		\[
		Te_n=\sqrt{1+\frac{1}{n^4}}\,e_n,\qquad n\in\mathbb{N}^*.
		\]
		Then $T^*Te_n=\bigl(1+\frac{1}{n^4}\bigr)e_n$. Let $a_n=\sqrt{1+\frac{1}{n^4}}$ and let $P_n$ be the orthogonal projection onto $\operatorname{span}\{e_n\}$. Then $\{a_n\}$ is non-negative and decreasing and $\sum_{n=1}^\infty P_n=I$. Since $\sigma_p(T)=\{a_n:n\in\mathbb{N}^*\}$ is reverse well-ordered (it is a decreasing sequence with infimum $1$, and $1\notin\sigma_p(T)$), Theorem~\ref{thm:structure-corrected} gives $T\in\beta_0(H)$. Direct computation yields the limit modulus $c=\inf\sigma_p(T)=1$ and the residual operator $D=T^*T-I=\sum_{n=1}^\infty \frac{1}{n^4}P_n$, which is compact.
		
		Now define
		\[
		Ke_n:=-\frac{1}{n^2}e_n,\qquad n\in\mathbb{N}^*.
		\]
		Then $\{-\frac{1}{n^2}\}$ are simple eigenvalues tending to $0$, so $K$ is a compact self-adjoint operator. Since $T$ and $K$ are both diagonal with respect to $\{e_n\}$, they commute. Hence the interaction operator is $R=T^*K+K^*T+K^*K=2TK+K^2$, and
		\[
		Re_n=\Bigl[2\sqrt{1+\frac{1}{n^4}}\Bigl(-\frac{1}{n^2}\Bigr)+\frac{1}{n^4}\Bigr]e_n.
		\]
		For every $n\in\mathbb{N}^*$, the eigenvalue of the total perturbation operator $L=D+R$ in the direction $e_n$ is
		\[
		\lambda_n=\frac{2}{n^4}-\frac{2}{n^2}\sqrt{1+\frac{1}{n^4}}.
		\]
		With $t_n:=1/n^2$, this can be written as
		\[
		\lambda_n=-2\frac{t_n}{\sqrt{1+t_n^2}+t_n}.
		\]
		The function
		\[
		h(t):=\frac{t}{\sqrt{1+t^2}+t},\qquad t>0,
		\]
		is strictly increasing, since
		\[
		h'(t)=\frac{1}{\sqrt{1+t^2}\,\bigl(\sqrt{1+t^2}+t\bigr)^2}>0.
		\]
		As $t_n\downarrow0$, it follows that
		\[
		\lambda_1<\lambda_2<\cdots<0,\qquad \lambda_n\to0.
		\]
		Thus $L$ has infinitely many distinct negative eigenvalues.
		
		Set $A=T+K$. Then $A$ is self-adjoint and
		\[
		Ae_n=\Bigl(\sqrt{1+\frac{1}{n^4}}-\frac{1}{n^2}\Bigr)e_n.
		\]
		Define $\alpha_n:=\sqrt{1+\frac{1}{n^4}}-\frac{1}{n^2}>0$. Then $A$ is a positive self-adjoint operator, i.e., $|A|=A$. Put $t_n:=1/n^2$ and $f(t):=\sqrt{1+t^2}-t$. Since
		\[
		f'(t)=\frac{t}{\sqrt{1+t^2}}-1<0,\qquad t>0,
		\]
		and $t_n$ decreases strictly to $0$, the sequence $\{\alpha_n\}=\{f(t_n)\}$ increases strictly to $1$. Moreover,
		\[
		\alpha_n^2=1+\frac{1}{n^4}-\frac{2}{n^2}\sqrt{1+\frac{1}{n^4}}+\frac{1}{n^4}
		=1+\frac{2}{n^4}-\frac{2}{n^2}\sqrt{1+\frac{1}{n^4}}<1,\qquad n\geqslant 2,
		\]
		so $\alpha_n<1$ for $n\geqslant 2$.
		
		Set $M:=\overline{\operatorname{span}}\{e_n:n\geqslant2\}$. Since $|A|$ is diagonal with respect to $\{e_n\}$, both $M$ and $M^\perp$ are invariant under $|A|$, so $M\in\mathcal{R}_{|A|}$. We have $\sigma_p(|A||_M)=\{\alpha_n:n\geqslant2\}$, strictly increasing to $1$, hence
		\[
		\|\,|A||_M\,\|=\sup_{n\geqslant2}\alpha_n=1.
		\]
		If the norm were attainable, there would exist a unit vector $x=\sum_{n=2}^\infty\xi_n e_n\in M$ with
		\[
		\sum_{n=2}^\infty\alpha_n^2|\xi_n|^2=1.
		\]
		But $\alpha_n<1$ for all $n$, so
		\[
		\sum_{n=2}^\infty\alpha_n^2|\xi_n|^2<\sum_{n=2}^\infty|\xi_n|^2=1,
		\]
		a contradiction. Therefore $|A|\notin\beta(H)$, i.e. $A\notin\beta_0(H)$.
		
		Note that in this example, for every $p\geqslant1$ we have
		\[
		\sum_{n=1}^\infty |{-1/n^2}|^p=\sum_{n=1}^\infty\frac{1}{n^{2p}}<\infty.
		\]
		Thus $K$ belongs to every Schatten $p$-class $\mathcal{S}_p(H)$ for $p\geqslant1$. Thus even for Schatten $p$-class perturbations, the finiteness condition on negative eigenvalues of the total perturbation operator remains necessary.
	\end{example}
	
	We now return to Example~\ref{ex:3} and identify exactly why Theorem~\ref{thm:main} did not apply there---the residual operator $D$ was not compact.
	
	\begin{example}\label{ex:2}
		Revisit the construction of Example~\ref{ex:3}: $T=\sqrt{2}P_1+P_2$, the limit modulus is $c=1$, and the residual operator is $D=T^*T-I=P_1$, which has infinite rank and is therefore not compact. By definition, $T|_{P_1H}=\sqrt{2}I$ commutes with $K$, so the interaction operator is $R=T^*K+K^*T+K^*K=2TK+K^2$. On $P_1H$,
		\[
		Re_n=(2\sqrt{2}k_n+k_n^2)e_n
		=\Bigl(-\sqrt{2}+\sqrt{2-\frac{1}{2^n}}\Bigr)\Bigl(\sqrt{2}+\sqrt{2-\frac{1}{2^n}}\Bigr)e_n
		=-\frac{e_n}{2^n}.
		\]
		Thus the total perturbation operator $L=D+R=P_1+R$ satisfies, on $P_1H$,
		\[
		Le_n=\Bigl(1-\frac{1}{2^n}\Bigr)e_n,
		\]
		with eigenvalues $\frac12,\frac34,\frac78,\frac{15}{16},\dots$, all positive. On $P_2H$, $L=0$. Hence $L$ has no negative eigenvalues whatsoever.
		
		In summary, for this example we have: $T\in\beta_0(H)$, $K$ is trace-class, $D$ is not compact, $L$ has zero negative eigenvalues, yet $T+K\notin\beta_0(H)$ (as proved in Example~\ref{ex:3}). This demonstrates that the compactness hypothesis on the residual operator $D$ in Theorem~\ref{thm:main} cannot be removed, even if the perturbing operator belongs to every Schatten $p$-class and the total perturbation operator has no negative eigenvalues at all.
	\end{example}
	
	\section{The invariant core}\label{sec6}
	\setcounter{equation}{0}
	
	The results of the preceding sections raise a natural question: which operators in $\beta_0(H)$ survive every compact perturbation? Define
	\[
	\widehat{\beta_0}(H):=\{T\in\beta_0(H): T+K\in\beta_0(H)\text{ for every }K\in\mathcal{K}(H)\}.
	\]
	The next theorem identifies the invariant core exactly.
	
	\begin{theorem}\label{thm:hatbeta}
		$\widehat{\beta_0}(H)=\mathcal{K}(H)$.
	\end{theorem}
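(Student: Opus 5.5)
The proof establishes the two inclusions separately.

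For $\mathcal{K}(H)\subseteq\widehat{\beta_0}(H)$, take $T$ compact. Then $T+K$ is compact for every $K\in\mathcal{K}(H)$, so it suffices to show $\mathcal{K}(H)\subseteq\beta_0(H)$. If $A$ is compact, then $|A|=(A^*A)^{1/2}$ is compact, positive and self-adjoint. By Theorem~\ref{thm:compact} it has pure point spectrum, where the kernel accounts for the eigenvalue $0$ and $H$ is the closed span of the eigenspaces. Its nonzero eigenvalues are positive with $0$ as their only possible accumulation point, so every nonempty subset of $\sigma_p(|A|)$ has a greatest element. Theorem~\ref{thm:structure-corrected} then gives $A\in\beta_0(H)$.

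For $\widehat{\beta_0}(H)\subseteq\mathcal{K}(H)$, suppose $T\in\beta_0(H)$ is not compact, and construct $K\in\mathcal{K}(H)$ with $T+K\notin\beta_0(H)$. The idea is to imitate Example~\ref{ex:3}. Since $T$ is not compact, neither is $|T|$. Write $|T|=\sum_{\lambda}\lambda E_{|T|}(\{\lambda\})$ as in Theorem~\ref{thm:structure-corrected}. Compactness fails, so either some eigenvalue $\lambda_0>0$ has infinite multiplicity, or there are infinitely many eigenvalues $\ge\varepsilon$ for some $\varepsilon>0$. In the second case the set $\overline{\sigma_p(|T|)}\cap[\varepsilon,\infty)$ is infinite and compact, so it has an accumulation point $s\ge\varepsilon$. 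By Lemma~\ref{lem:rwo-facts} and reverse well-ordering, there is a strictly decreasing sequence of eigenvalues $\lambda_n\downarrow s>0$.

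In either case we obtain an orthonormal sequence $(e_n)$ of eigenvectors, $|T|e_n=\lambda_ne_n$. In the first case take $\lambda_n=\lambda_0$; in the second, $\lambda_n\downarrow s$. In both cases $\lambda_n\to s>0$ with $\lambda_n\ge s$. Choose target values $\mu_n$ with $\mu_n\uparrow s$ strictly, $\mu_n<s$, $\mu_n>0$ and $|\mu_n-\lambda_n|\to0$; for instance $\mu_n=s-2^{-n}$ for large $n$. Define the diagonal compact self-adjoint operator
\[
F:=\sum_n(\mu_n-\lambda_n)\,e_n\otimes e_n .
\]
Let $T=U|T|$ be the polar decomposition. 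Because $\lambda_n>0$, each $e_n\in N(T)^\perp$, so $U$ is isometric on $\overline{\operatorname{span}}\{e_n\}$. Put $K:=UF$, which is compact. Then $T+K=U(|T|+F)$.

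The main obstacle is verifying $|T+K|=|T|+F$. We have $(T+K)^*(T+K)=(|T|+F)U^*U(|T|+F)$. The range of $|T|+F$ lies in $\overline{R(|T|)}=N(T)^\perp$, since $F$ maps into $\operatorname{span}\{e_n\}\subseteq N(T)^\perp$. Hence $U^*U$ acts as the identity there, and $(T+K)^*(T+K)=(|T|+F)^2$. The operator $|T|+F$ commutes with the spectral structure: it equals $|T|$ on the orthocomplement of $\{e_n\}$ and acts as $\mu_n$ on $e_n$. It is therefore positive, so $|T+K|=|T|+F$.

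Finally, $M=\overline{\operatorname{span}}\{e_n\}$ reduces $|T+K|$, and $\bigl\||T+K|\,|_M\bigr\|=\sup_n\mu_n=s$. This norm is not attained, since $\sum\mu_n^2|\xi_n|^2<\sum|\xi_n|^2 s^2$ for every unit vector $\sum\xi_ne_n\in M$. Thus $|T+K|\notin\beta(H)$, so $T+K\notin\beta_0(H)$ and $T\notin\widehat{\beta_0}(H)$.
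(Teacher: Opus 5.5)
Your proof is correct and follows the paper's argument. The inclusion $\mathcal{K}(H)\subseteq\widehat{\beta_0}(H)$ comes from compactness of $|T+K|$ together with the structure theorem. The converse uses a diagonal compact perturbation $F$ that moves eigenvalues on an orthonormal eigenvector sequence to values $\mu_n\uparrow s$ strictly; this is transported by the polar decomposition as $K=UF$, so that $|T+K|=|T|+F$. The only cosmetic difference is how you find $s>0$: you take it directly from the failure of compactness (an infinite-multiplicity positive eigenvalue, or eigenvalues accumulating, necessarily from above, at some $s>0$) rather than from a point $a>0$ of $\sigma_{\mathrm{ess}}(|T|)$. You then handle both cases with a single construction.
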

	
	\begin{proof}
		Let $T\in\mathcal{K}(H)$. For every $K\in\mathcal{K}(H)$, the operator $T+K$ is compact. Its modulus $|T+K|$ is compact and positive, so its non-zero eigenvalues have finite multiplicity and can only accumulate at $0$. Hence its point spectrum is reverse well-ordered, and Theorem~\ref{thm:structure-corrected} yields $T+K\in\beta_0(H)$. Therefore
		\[
		\mathcal{K}(H)\subseteq\widehat{\beta_0}(H).
		\]
		
		Now suppose $T\in\widehat{\beta_0}(H)\setminus\mathcal{K}(H)$. Since $T=U|T|$ is the polar decomposition and $U$ is bounded, compactness of $|T|$ would imply compactness of $T$. Hence $|T|$ is not compact. By Theorem~\ref{thm:structure-corrected}, $|T|$ has pure point spectrum and reverse well-ordered point spectrum. If $\sigma_{\mathrm{ess}}(|T|)\subseteq\{0\}$, then every nonzero spectral point is an isolated eigenvalue of finite multiplicity and $0$ is the only possible accumulation point. By Theorem~\ref{thm:compact}, this would imply that $|T|$ is compact, a contradiction. Hence there exists
		\[
		a>0\qquad\text{with}\qquad a\in\sigma_{\mathrm{ess}}(|T|).
		\]
		Put $E_a:=\ker(|T|-aI)$. We distinguish two exhaustive cases according to $\dim E_a$.
		
		\noindent\textbf{Case 1: $\dim E_a=\infty$.}
		
		Choose an orthonormal sequence $\{e_j\}_{j=1}^\infty$ in $E_a$. Define a compact self-adjoint operator $K$ by
		\[
		K e_j=-\frac{a}{2(j+1)}e_j,\qquad
		K|_{\operatorname{span}\{e_j:j\geqslant1\}^{\perp}}=0.
		\]
		Put $A:=|T|+K$ and $M:=\overline{\operatorname{span}}\{e_j:j\geqslant1\}$. Then $A\geqslant0$, $M\in\mathcal{R}_A$, and
		\[
		Ae_j=a\left(1-\frac{1}{2(j+1)}\right)e_j.
		\]
		Thus the eigenvalues of $A|_M$ increase strictly to $a$ and remain below $a$. Consequently,
		\[
		\|A|_M\|=a,
		\]
		but $a$ is not an eigenvalue of $A|_M$, so $A|_M$ does not attain its norm. Hence $A\notin\beta(H)_+$, i.e. $A\notin\beta_0(H)$.
		
		\noindent\textbf{Case 2: $\dim E_a<\infty$.}
		
		Since $a\in\sigma_{\mathrm{ess}}(|T|)$ but $E_a$ is finite-dimensional (possibly $E_a=\{0\}$), $a$ cannot be an isolated spectral point of $|T|$. As $|T|$ has pure point spectrum,
		\[
		\sigma(|T|)=\overline{\sigma_p(|T|)},
		\]
		so $a$ is an accumulation point of $\sigma_p(|T|)\setminus\{a\}$. Reverse well-ordering excludes accumulation from below: otherwise there would exist a strictly increasing sequence of eigenvalues converging to $a$, contradicting the fact that every nonempty subset of $\sigma_p(|T|)$ has a greatest element. Hence the accumulation must occur from above, and there are distinct eigenvalues
		\[
		a_1>a_2>\cdots>a,
		\qquad a_j\downarrow a.
		\]
		Choose unit vectors $e_j\in\ker(|T|-a_jI)$ and positive numbers $\varepsilon_j<a/2$ with $\varepsilon_j\downarrow0$. Define $K$ by
		\[
		K e_j=(a-a_j-\varepsilon_j)e_j,
		\qquad
		K|_{\operatorname{span}\{e_j:j\geqslant1\}^{\perp}}=0.
		\]
		Since $a_j\to a$ and $\varepsilon_j\to0$, the coefficients of $K$ tend to zero, so $K$ is compact and self-adjoint. Put $A:=|T|+K$ and $M:=\overline{\operatorname{span}}\{e_j:j\geqslant1\}$. On $E_{a_j}\ominus\operatorname{span}\{e_j\}$ the operator $A$ acts as $a_jI$, while on $e_j$ it acts as $a-\varepsilon_j$. On every other eigenspace of $|T|$, it acts as $|T|$. Thus $A\geqslant0$ and $M\in\mathcal{R}_A$. Moreover,
		\[
		Ae_j=(a-\varepsilon_j)e_j,
		\]
		and the eigenvalues $a-\varepsilon_j$ increase strictly to $a$. Therefore
		\[
		\|A|_M\|=a,
		\]
		while $a\notin\sigma_p(A|_M)$. Hence $A\notin\beta(H)_+$, i.e. $A\notin\beta_0(H)$.
		
		In either case, $K$ vanishes on $\ker T$, while all modified eigenvalues remain strictly positive. Thus $A$ vanishes on $\ker T$ and is injective on $(\ker T)^\perp$, so
		\[
		\ker A=\ker|T|=\ker T.
		\]
		Since $A=A^*$,
		\[
		\operatorname{ran}A\subseteq(\ker T)^\perp,
		\qquad
		\operatorname{ran}K\subseteq(\ker T)^\perp.
		\]
		Let $T=U|T|$ be the polar decomposition and set $C:=UK$. Then $C$ is compact and
		\[
		T+C=U(|T|+K)=UA.
		\]
		Since $A\geqslant0$ and $\operatorname{ran}A\subseteq(\ker T)^\perp$, we have
		\[
		(T+C)^*(T+C)=A U^*U A=A^2,
		\qquad |T+C|=A\notin\beta(H).
		\]
		Hence $T+C\notin\beta_0(H)$, contradicting $T\in\widehat{\beta_0}(H)$. Therefore $\widehat{\beta_0}(H)\subseteq\mathcal{K}(H)$, and equality follows.
	\end{proof}
	
	\section{Conclusions for $\beta(H)$}\label{sec:beta}
	\setcounter{equation}{0}
	
	The perturbation results of the preceding sections are formulated for the subclass $\beta_0(H)$, which is the natural setting for the spectral analysis. In this section we collect what can be said about the larger class $\beta(H)$ itself. We begin with an exact characterization of $\beta(H)$ that involves only the modulus and the reducing subspaces of $T$, and then explain how it differs from the condition $|T|\in\beta(H)$.
	
	
	\begin{proposition}[Characterization of $\beta(H)$]\label{prop:beta-characterization}
		Let $T\in\mathcal{B}(H)$. Then
		\[
		T\in\beta(H)
		\iff
		\text{$|T|$ attains its norm on every reducing subspace of $T$.}
		\]
		Equivalently,
		\[
		T\in\beta(H)
		\iff
		\forall\,M\in\mathcal{R}_T\setminus\{0\},\quad
		\|\,|T|\big|_M\|\in\sigma_p\!\big(|T|\big|_M\big).
		\]
	\end{proposition}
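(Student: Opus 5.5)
The plan is to reduce everything to the two elementary identities already used in the proof of Proposition~\ref{thm:positive}, now applied on an arbitrary reducing subspace of $T$ rather than of $|T|$. Fix a non-zero $M\in\mathcal{R}_T$. First I will check that $M$ reduces $|T|$. Since $P_M$ commutes with $T$ and with $T^*$ (because $M$ and $M^\perp$ are both $T$-invariant), it commutes with $T^*T$. By the continuous functional calculus (approximating $t\mapsto\sqrt t$ uniformly by polynomials on $\sigma(T^*T)$), it then commutes with $|T|=(T^*T)^{1/2}$. Hence $|T|\big|_M$ is a well-defined positive operator on $M$.

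Next, for every $x\in M$,
\[
\|Tx\|^2=\langle T^*Tx,x\rangle=\||T|x\|^2 .
\]
Taking suprema over $S_M$ gives $\|T|_M\|=\|\,|T|\big|_M\|$. Consequently $T|_M$ attains its norm at a unit vector $x\in M$ if and only if $|T|\big|_M$ attains its norm at the same $x$. Quantifying over all non-zero $M\in\mathcal{R}_T$ yields the first equivalence directly from Definition~\ref{def:beta}.

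For the second formulation, I apply Proposition~\ref{prop:NA}(2) to the positive operator $|T|\big|_M$ on the Hilbert space $M$. That proposition is stated for an infinite-dimensional $H$, but its proof works verbatim on any Hilbert space, including finite-dimensional $M$, where norm attainment is automatic and the norm of a positive matrix is its largest eigenvalue. It gives: $|T|\big|_M$ is norm attaining if and only if $\|\,|T|\big|_M\|\in\sigma_p\big(|T|\big|_M\big)$. If one prefers a self-contained check, it suffices to note that a unit $x$ with $\||T|x\|=\||T|\|$ (norms taken on $M$) gives equality in Cauchy--Schwarz for $\langle |T|^2x,x\rangle\le\||T|\|^2$. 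This forces $|T|^2x=\||T|\|^2x$, and then, since $|T|$ is positive, $|T|x=\||T|\|x$.

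The only point needing care is the commutation $P_M|T|=|T|P_M$. It relies on $M$ reducing $T$, so that $P_M$ commutes with $T^*$ as well as $T$; this is the step I expect to be the main obstacle, though it is routine. I would also add a remark that this characterization differs from $|T|\in\beta(H)$ only because $\mathcal{R}_T$ may be much smaller than $\mathcal{R}_{|T|}$, as Example~\ref{ex:positive-fails} shows.
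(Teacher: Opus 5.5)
Your proposal is correct and follows essentially the same route as the paper: it uses $\|Tx\|=\||T|x\|$ for $x\in M\in\mathcal{R}_T$ to identify norm attainment of $T|_M$ with that of $|T|\big|_M$, then invokes Proposition~\ref{prop:NA}(2) for the positive operator $|T|\big|_M$. Your functional-calculus argument that $M$ reduces $|T|$, and your remark on finite-dimensional $M$, supply details the paper leaves implicit.
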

	
	\begin{proof}
		For any $M\in\mathcal{R}_T$, the subspace $M$ also reduces $|T|$, and for every $x\in M$,
		\[
		\|Tx\|=\bigl\||T|x\bigr\|.
		\]
		Hence
		\[
		\|T|_M\|=\bigl\||T|\big|_M\bigr\|,
		\]
		and $T|_M$ attains its norm if and only if $|T|\big|_M$ attains its norm. Therefore
		\[
		T\in\beta(H)
		\iff
		\forall\,M\in\mathcal{R}_T\setminus\{0\},\quad
		|T|\big|_M\text{ attains its norm}.
		\]
		Since $|T|\big|_M$ is a positive operator on $M$, it attains its norm if and only if its norm is an eigenvalue (Proposition~\ref{prop:NA}(2)), i.e.
		\[
		\|\,|T|\big|_M\|\in\sigma_p\!\big(|T|\big|_M\big).
		\]
		This proves the claim.
	\end{proof}
	
	\begin{remark}\label{rem:beta-vs-beta0}
		Proposition~\ref{prop:beta-characterization} must be distinguished from the condition $|T|\in\beta(H)$ studied in the previous sections. The latter requires $|T|$ to attain its norm on every reducing subspace of $|T|$, whereas the former only requires this on the (generally smaller) family of reducing subspaces of $T$. Since
		\[
		\mathcal{R}_T\subseteq\mathcal{R}_{|T|},
		\]
		the condition $|T|\in\beta(H)$ is strictly stronger than $T\in\beta(H)$. Equivalently, the implication $|T|\in\beta(H)\Rightarrow T\in\beta(H)$ of Proposition~\ref{thm:positive} holds, while the converse fails, as Example~\ref{ex:positive-fails} shows. The subclass $\beta_0(H)=\{T:|T|\in\beta(H)\}$ therefore consists precisely of those operators $T\in\beta(H)$ for which the two reducing-subspace families yield the same norm-attainability, in the sense that $|T|$ attains its norm on the larger family $\mathcal{R}_{|T|}$ as well.
	\end{remark}
	
	\begin{remark}\label{rem:structural-consequences}
		Two structural consequences of Proposition~\ref{prop:beta-characterization} deserve emphasis.
		
		\emph{(i) Invariance under isometries.} The condition depends only on the reducing subspaces of $T$ and on the values $\|Tx\|=\||T|x\|$. In particular, if $W\in\mathcal{B}(H)$ is unitary, then $\mathcal{R}_{W^*TW}=W^*\mathcal{R}_T W$, and it follows immediately that
		\[
		T\in\beta(H)\iff W^*TW\in\beta(H).
		\]
		The class $\beta(H)$ is thus invariant under unitary conjugation.
		
		\emph{(ii) Reduction to the modulus.} The operator $T$ enters the characterization only through $|T|$ and through the family $\mathcal{R}_T$. This is the structural content of Proposition~\ref{thm:positive} in its correct form: the modulus completely governs the values of $\|Tx\|$ on each reducing subspace, but the relevant collection of subspaces is $\mathcal{R}_T$, not $\mathcal{R}_{|T|}$. Whenever $\mathcal{R}_T=\mathcal{R}_{|T|}$, the condition collapses to $|T|\in\beta(H)$ and coincides with the description given in Theorem~\ref{thm:structure-corrected}.
	\end{remark}
	
	\section{Norm density of $\beta(H)$}\label{sec7}
	\setcounter{equation}{0}
	
	In this section we show that, although $\beta(H)$ is a proper subclass of the norm attaining operators $\mathcal{N}(H)$, it is nevertheless dense in $\mathcal{B}(H)$ with respect to the operator norm. This stands in sharp contrast with the results of Sections~4--6, which show that the subclass $\beta_0(H)$ is highly fragile under compact perturbations. The idea of the proof is taken from the work of Nag and Ramesh \cite{nag2026}, where the analogous density theorem was established for the class $\mathcal{M}_r(H)$ of minimum attaining operators on reducing subspaces by combining the spectral theorem with the polar decomposition. We begin with two lemmas.
	
	\begin{lemma}\label{lem:finite}
		Let $T\in\mathcal{B}(H)$ be self-adjoint with finite spectrum. Then $T\in\beta(H)$.
	\end{lemma}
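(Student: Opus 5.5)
The plan is to go through the modulus. By Proposition~\ref{thm:positive}, it suffices to show that $|T|\in\beta(H)$. Since $T$ is self-adjoint, $|T|=(T^2)^{1/2}$ is a continuous function of $T$. The spectral mapping theorem then gives
\[
\sigma(|T|)=\{|\lambda|:\lambda\in\sigma(T)\},
\]
which is a finite set.

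The first step is to show that $|T|$ has pure point spectrum. A self-adjoint operator with finite spectrum $\{\mu_1,\dots,\mu_k\}$ has its spectral measure supported on these finitely many points. Each $\mu_i$ is isolated, so $E_{|T|}(\{\mu_i\})$ is the orthogonal projection onto $\ker(|T|-\mu_i I)$, and $\sum_i E_{|T|}(\{\mu_i\})=I$. Hence $|T|=\sum_i\mu_iE_{|T|}(\{\mu_i\})$. A point $\mu_i$ whose spectral projection vanished would not lie in the support of $E_{|T|}$, so every point of $\sigma(|T|)$ is an eigenvalue. Thus $|T|$ has pure point spectrum and $\sigma_p(|T|)=\sigma(|T|)$ is finite.

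The second step is to conclude. A finite set is trivially reverse well-ordered, so Proposition~\ref{prop:reverse} (equivalently, Theorem~\ref{thm:structure-corrected}) gives $|T|\in\beta(H)_+$. Proposition~\ref{thm:positive} then yields $T\in\beta(H)$.

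As a cross-check, a direct argument also works. For nonzero $M\in\mathcal{R}_T$, Lemma~\ref{lem:reducing-spectral} shows that $M$ splits as the orthogonal sum of the pieces $\ker(T-\lambda I)\cap M$. The eigenvalue of largest modulus among the finitely many nonzero pieces then gives a norm-attaining vector.

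There is no real obstacle here. The only point that needs care is justifying that finite spectrum forces the spectral measure to be atomic with eigenspace projections. This follows from isolatedness of each point, via the Riesz projection or the identity $E(\{\mu\})=\chi_{\{\mu\}}(S)$.
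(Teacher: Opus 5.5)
Your proof is correct, but its main line differs from the paper's.

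The paper argues directly on $T$. It writes $T=\sum_{j=1}^m\lambda_jP_j$ and uses Lemma~\ref{lem:reducing-spectral} to split each $M\in\mathcal{R}_T$ along the $P_j$. It then picks $j_0$ with $|\lambda_{j_0}|$ maximal among the indices with $P_jM\neq\{0\}$, and checks that $\|Ty\|\leqslant|\lambda_{j_0}|$ for unit vectors $y\in M$. This is exactly your ``cross-check''.

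Your primary route instead passes to $|T|=f(T)$ with $f(\lambda)=|\lambda|$ and verifies that a self-adjoint operator with finite spectrum has pure point spectrum. It then invokes Proposition~\ref{prop:reverse} and Proposition~\ref{thm:positive}. There is no circularity, since both propositions are proved in Sections~2--3 independently of this lemma.

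What your route buys is a formally stronger conclusion: $|T|\in\beta(H)$, i.e.\ $T\in\beta_0(H)$, which means norm attainment on all of $\mathcal{R}_{|T|}\supseteq\mathcal{R}_T$. It also reuses the reverse well-ordered criterion rather than repeating its proof. The price is the extra spectral-mapping and atomicity step.

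The paper's route is self-contained, treats the signed eigenvalues directly through $\max_{j\in J}|\lambda_j|$, and never needs the modulus. In Theorem~\ref{thm:density} the lemma is applied only to the positive operator $|S|$, where $\beta(H)$- and $\beta_0(H)$-membership coincide. So the extra strength of your conclusion is not needed there.
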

	
	\begin{proof}
		By the spectral theorem, $T=\sum_{j=1}^{m}\lambda_j P_j$, where $\lambda_1,\dots,\lambda_m$ are distinct real numbers and $P_1,\dots,P_m$ are mutually orthogonal projections with $\sum_{j=1}^{m}P_j=I$.
		
		Let $M\in\mathcal{R}_T$ be arbitrary. By Lemma~\ref{lem:reducing-spectral}, $M$ reduces each spectral projection $P_j$ of $T$, so $P_jM\subseteq M$. Set
		\[
		J:=\{j:P_j M\neq\{0\}\}.
		\]
		As $M\neq\{0\}$, we have $J\neq\varnothing$. Choose $j_0\in J$ with $|\lambda_{j_0}|=\max_{j\in J}|\lambda_j|$, and take a unit vector $x_0\in P_{j_0}M$. Then $Tx_0=\lambda_{j_0}x_0$, whence $\|Tx_0\|=|\lambda_{j_0}|$.
		
		For any unit vector $y\in M$, we have $y=\sum_{j\in J}P_j y$, and by orthogonality,
		\[
		\|Ty\|^2=\sum_{j\in J}|\lambda_j|^2\|P_j y\|^2
		\leqslant|\lambda_{j_0}|^2\sum_{j\in J}\|P_j y\|^2
		=|\lambda_{j_0}|^2\|y\|^2=|\lambda_{j_0}|^2.
		\]
		Hence $\|T|_M\|=|\lambda_{j_0}|=\|Tx_0\|$, so $T|_M$ is norm attaining. Since $M\in\mathcal{R}_T$ was arbitrary, $T\in\beta(H)$.
	\end{proof}
	
	\begin{lemma}\label{lem:approx}
		Let $T\in\mathcal{B}(H)$ be self-adjoint. Then for every $\varepsilon>0$ there exists a self-adjoint operator $T_\varepsilon$ with finite spectrum such that $\|T-T_\varepsilon\|<\varepsilon$.
	\end{lemma}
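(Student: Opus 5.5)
The natural route is to discretize the spectrum of $T$ through the Borel functional calculus. Since $T$ is self-adjoint, $\sigma(T)$ is a compact subset of $\mathbb{R}$, contained in some interval $[m,M]$ with $m=\inf\sigma(T)$ and $M=\sup\sigma(T)$. Given $\varepsilon>0$, I will choose points $m=t_0<t_1<\dots<t_N=M$ with mesh $\max_j(t_j-t_{j-1})<\varepsilon$, and form the Borel partition of $[m,M]$ into
\[
\Delta_1=[t_0,t_1],\qquad \Delta_j=(t_{j-1},t_j]\quad(2\leqslant j\leqslant N).
\]
I then define the step function
\[
f_\varepsilon=\sum_{j=1}^N t_j\chi_{\Delta_j},
\]
which is real-valued, bounded, Borel, and takes only finitely many values. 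It also satisfies $|f_\varepsilon(\lambda)-\lambda|<\varepsilon$ for every $\lambda\in[m,M]$.

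Next I set $T_\varepsilon:=f_\varepsilon(T)=\sum_{j=1}^N t_jE_T(\Delta_j)$. Since $f_\varepsilon$ is real and bounded, $T_\varepsilon$ is self-adjoint, and the $E_T(\Delta_j)$ are mutually orthogonal projections summing to $I$. The operator $T_\varepsilon$ is therefore a finite linear combination of orthogonal projections with orthogonal ranges. Its spectrum is contained in $\{t_j: E_T(\Delta_j)\neq0\}$, which is a finite set, so $T_\varepsilon$ has finite spectrum.

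For the norm estimate, the bounded Borel functional calculus gives
\[
\|T-T_\varepsilon\|=\|(\mathrm{id}-f_\varepsilon)(T)\|\leqslant\sup_{\lambda\in\sigma(T)}|\lambda-f_\varepsilon(\lambda)|.
\]
The supremum of a function over the compact set $\sigma(T)$ is attained, and each value is $<\varepsilon$, so the right-hand side is strictly less than $\varepsilon$ (alternatively, run the construction with $\varepsilon/2$ to avoid this point). If one prefers to avoid the Borel calculus, the same bound follows directly: for any unit $x$, expand $x=\sum_j E_T(\Delta_j)x$ and note that $\|(T-t_j)E_T(\Delta_j)\|\leqslant\sup_{\Delta_j\cap\sigma(T)}|\lambda-t_j|$. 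Summing orthogonally then gives $\|(T-T_\varepsilon)x\|^2\leqslant\varepsilon'^2\|x\|^2$ with $\varepsilon'<\varepsilon$.

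There is no real obstacle here. The only point needing care is the strictness of the inequality, and the choice of mesh $\varepsilon/2$ settles it. Combined with Lemma~\ref{lem:finite}, this shows that self-adjoint operators lie in the norm closure of $\beta(H)$. That fact feeds into the density theorem via the polar decomposition.
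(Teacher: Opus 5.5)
Your proof is correct and is essentially the paper's argument: you take a finite Borel partition of the spectrum into pieces of diameter less than $\varepsilon$, form the step operator $\sum_j t_j E_T(\Delta_j)$, and bound $\|T-T_\varepsilon\|$ by $\sup_{\lambda\in\sigma(T)}|\lambda-f_\varepsilon(\lambda)|$ via the functional calculus. One small correction: $\lambda-f_\varepsilon(\lambda)$ is discontinuous, so the claim that its supremum over the compact set $\sigma(T)$ is attained does not justify the strict inequality; strictness follows directly, as in the paper, from $|\lambda-f_\varepsilon(\lambda)|\leqslant\max_j(t_j-t_{j-1})<\varepsilon$, since there are only finitely many pieces (your $\varepsilon/2$ fallback also works).
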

	
	\begin{proof}
		By the spectral theorem, $T=\int_{\sigma(T)}\lambda\,dE(\lambda)$. Fix $\varepsilon>0$ and choose a finite Borel partition $\Delta_1,\dots,\Delta_n$ of the compact set $\sigma(T)$ such that $\operatorname{diam}(\Delta_k)<\varepsilon$ for every $k$. For each $k$ choose $\lambda_k\in\Delta_k$ and define
		\[
		T_\varepsilon:=\sum_{k=1}^{n}\lambda_k E(\Delta_k).
		\]
		The projections $E(\Delta_k)$ are mutually orthogonal and sum to $I$, so $T_\varepsilon$ is self-adjoint with finite spectrum. Moreover,
		\[
		T-T_\varepsilon=\int_{\sigma(T)}\Bigl[\lambda-\sum_{k=1}^{n}\lambda_k\chi_{\Delta_k}(\lambda)\Bigr]\,dE(\lambda),
		\]
		and for $\lambda\in\Delta_k$ we have $|\lambda-\lambda_k|\leqslant\operatorname{diam}(\Delta_k)<\varepsilon$. Therefore
		\[
		\|T-T_\varepsilon\|\leqslant\sup_{\lambda\in\sigma(T)}\Bigl|\lambda-\sum_{k=1}^{n}\lambda_k\chi_{\Delta_k}(\lambda)\Bigr|<\varepsilon.
		\]
	\end{proof}
	
	\begin{theorem}\label{thm:density}
		$\beta(H)$ is dense in $\mathcal{B}(H)$ with respect to the operator norm.
	\end{theorem}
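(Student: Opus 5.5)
The plan is to reduce to the modulus through the polar decomposition, approximating $|T|$ rather than $T$. Fix $T\in\mathcal{B}(H)$ and $\varepsilon>0$, and write $T=U|T|$ for the polar decomposition, so $U^*U$ is the orthogonal projection onto $\overline{R(|T|)}=N(T)^\perp$. The goal is an operator $W=US$ with $\|T-W\|<\varepsilon$ such that $|W|$ has finite spectrum. Then $|W|$ has pure point spectrum and finite, hence reverse well-ordered, point spectrum. Theorem~\ref{thm:structure-corrected} gives $W\in\beta_0(H)$, and Proposition~\ref{thm:positive} gives $W\in\beta(H)$. (Lemma~\ref{lem:finite} could also be used to see that $|W|\in\beta(H)$ directly.)

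\textbf{Step 1: building $S$.} The construction refines Lemma~\ref{lem:approx} so that the approximant vanishes near $0$. Choose $0<\delta<\varepsilon$ and partition $[0,\|T\|]$ into finitely many Borel pieces $\Delta_0=[0,\delta),\Delta_1,\dots,\Delta_n$, each of diameter $<\varepsilon$. Define the step function $g$ by $g=0$ on $\Delta_0$ and $g=\lambda_k$ on $\Delta_k$ for chosen points $\lambda_k\in\Delta_k$. Then $\sup_{\lambda\in\sigma(|T|)}|g(\lambda)-\lambda|<\varepsilon$. Setting $S:=g(|T|)$, the Borel functional calculus shows that $S\geqslant 0$ has finite spectrum and $\||T|-S\|<\varepsilon$. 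Consequently
\[
\|T-US\|=\|U(|T|-S)\|\leqslant\||T|-S\|<\varepsilon .
\]

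\textbf{Step 2: the key point.} I expect the main obstacle to be verifying $|US|=S$, since $U$ is only a partial isometry and need not extend to a unitary. This is exactly why $g$ is forced to vanish on $[0,\delta)$. Because $g=0$ near $0$, we can write $g(\lambda)=\lambda h(\lambda)$, where $h(\lambda)=g(\lambda)/\lambda$ for $\lambda\geqslant\delta$ and $h=0$ otherwise. The function $h$ is bounded Borel, so $S=|T|\,h(|T|)$ and hence $R(S)\subseteq R(|T|)\subseteq N(T)^\perp$. Therefore $U^*US=S$, and so
\[
(US)^*(US)=SU^*US=S^2 .
\]
By uniqueness of positive square roots, $|US|=S$.

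\textbf{Step 3: conclusion.} Since $S$ has finite spectrum, the paragraph above applies with $W=US$, giving $W\in\beta_0(H)\subseteq\beta(H)$ and $\|T-W\|<\varepsilon$. As $\varepsilon>0$ and $T$ were arbitrary, $\beta(H)$ is norm-dense in $\mathcal{B}(H)$. In fact the argument shows the stronger statement that $\beta_0(H)$ is dense.
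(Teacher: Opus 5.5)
Your proposal is correct and follows essentially the same route as the paper: you use the polar decomposition $T=U|T|$, replace $|T|$ by a positive finite-spectrum operator $S$ with $\||T|-S\|<\varepsilon$ and range in $N(T)^\perp$, and then $U^*US=S$ gives $|US|=S$, so $US\in\beta_0(H)\subseteq\beta(H)$. The only difference is cosmetic: the paper restricts $|T|$ to $H_0=(\ker T)^\perp$, approximates it there via Lemma~\ref{lem:approx}, and extends by $0$ on $\ker T$, whereas you enforce the range condition through the functional calculus with $g$ vanishing near $0$. In fact $g(0)=0$ alone already suffices, since then $\ker|T|\subseteq\ker S$ and $\overline{R(S)}=(\ker S)^\perp\subseteq N(T)^\perp$.
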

	
	\begin{proof}
		If $T=0$, the assertion is immediate. Assume $T\neq0$ and let $\varepsilon>0$. Consider the polar decomposition $T=U|T|$, where $U$ is a partial isometry with $U^*U=P_{(\ker T)^\perp}$. Set $H_0:=(\ker T)^\perp$ and let $A:H_0\to H_0$ be the restriction of $|T|$ to $H_0$; then $A$ is a positive operator and $|T|=0\oplus A$ with respect to the decomposition $H=\ker T\oplus H_0$.
		
		By Lemma~\ref{lem:approx} applied to the self-adjoint operator $A$, there exists a self-adjoint operator $A_\varepsilon$ on $H_0$ with finite spectrum such that $\|A-A_\varepsilon\|<\varepsilon$.
		
		With respect to the decomposition $H=\ker T\oplus H_0$, define $B\in\mathcal{B}(H)$ by $B|_{\ker T}=0$ and $B|_{H_0}=A_\varepsilon$, and set $S:=UB$. Since $U^*U=P_{H_0}$ and $B$ maps $H$ into $H_0$ vanishing on $\ker T$, we have $U^*UB=B$, whence
		\[
		S^*S=B^*U^*UB=B^*B=B^2,
		\]
		because $B$ is self-adjoint. Hence $|S|=(S^*S)^{1/2}=|B|=0\oplus|A_\varepsilon|$. As $A_\varepsilon$ has finite spectrum, $\sigma(|A_\varepsilon|)=\{|\lambda|:\lambda\in\sigma(A_\varepsilon)\}$ is finite, so $|S|$ is a positive operator with finite spectrum. By Lemma~\ref{lem:finite}, $|S|\in\beta(H)_+$, hence $|S|\in\beta(H)$, and by Proposition~\ref{thm:positive}, $S\in\beta(H)$.
		
		Finally,
		\[
		\|T-S\|=\|U|T|-UB\|\leqslant\|U\|\,\||T|-B\|\leqslant\||T|-B\|
		=\|A-A_\varepsilon\|<\varepsilon,
		\]
		where we used $\|U\|\leqslant 1$ and the fact that $|T|-B$ vanishes on $\ker T$ and coincides with $A-A_\varepsilon$ on $H_0$. Thus every $T\in\mathcal{B}(H)$ is a norm-limit of elements of $\beta(H)$, i.e., $\beta(H)$ is dense in $\mathcal{B}(H)$.
	\end{proof}
	
	The results of this paper give a structural description of the class $\beta(H)$ and its behaviour under compact perturbations. On the one hand, Proposition~\ref{prop:beta-characterization} provides an exact characterization of $\beta(H)$ in terms of the modulus and the reducing subspaces of $T$, and Theorem~\ref{thm:density} shows that $\beta(H)$ is norm-dense in $\mathcal{B}(H)$, even though it is a proper subclass of the norm attaining operators $\mathcal{N}(H)$. On the other hand, the behavior of the subclass $\beta_0(H)$ under compact perturbations reveals a remarkable fragility. Theorem~\ref{thm:finrank} shows that finite-rank perturbations are always harmless, offering a stark contrast with the general compact case illustrated by Example~\ref{ex:3}. Theorem~\ref{thm:general} gives a full characterization in terms of the pure point spectrum of the total perturbation operator $L$ being reverse well-ordered, and when the residual operator $D$ is compact, Theorem~\ref{thm:main} reduces this to the simple condition that $\sigma_p(L)\cap(-\infty,0)$ is finite. Examples~\ref{ex:1} and~\ref{ex:2} demonstrate that both conditions (the finiteness of negative eigenvalues of $L$ and the compactness of $D$) are sharp and cannot be relaxed, even when the perturbing operator belongs to every Schatten $p$-class. Finally, Theorem~\ref{thm:hatbeta} reveals that the largest subset of $\beta_0(H)$ stable under \emph{all} compact perturbations is precisely $\mathcal{K}(H)$ itself: for any non-compact operator in $\beta_0(H)$, there always exists a compact perturbation that pushes it out of the class. The gap between $\beta_0(H)$ and the larger class $\beta(H)$ is precisely accounted for by the difference between the two reducing-subspace families $\mathcal{R}_T$ and $\mathcal{R}_{|T|}$, as explained in Remark~\ref{rem:beta-vs-beta0}.
	
	\section*{Acknowledgements}
	
	The authors would like to thank Professor Youqing Ji for the valuable comments and suggestions on this paper.


\begin{thebibliography}{99}
		\normalsize
		
		\bibitem{bishop1961}
		Bishop, E., Phelps, R.R.: A proof that every Banach space is subreflexive.
		Bull. Amer. Math. Soc. \textbf{67}, 97--98 (1961)
		
		\bibitem{Bollobas1970}
		Bollob\'as, B.: An extension to the theorem of Bishop and Phelps.
		Bull. London Math. Soc. \textbf{2}, 181--182 (1970)
		
		\bibitem{Carvajal2012}
		Carvajal, X., Neves, W.: Operators that achieve the norm.
		Integral Equations Operator Theory \textbf{72}, 179--195 (2012)
		
		\bibitem{Conway}
		Conway, J.B.: A Course in Functional Analysis.
		2nd ed., Graduate Texts in Mathematics, Vol. 96, Springer, New York (1990)
		
		\bibitem{Kato1995}
		Kato, T.: Perturbation Theory for Linear Operators.
		2nd ed., Springer-Verlag, Berlin (1995)
		
		\bibitem{Lindenstrauss1963}
		Lindenstrauss, J.: On operators which attain their norm.
		Israel J. Math. \textbf{1}, 139--148 (1963)
		
		\bibitem{nag2026}
		Nag, P., Ramesh, G.: Minimum attaining operators on reducing subspaces: Spectral structure and density.
		arXiv:2608.19286 [math.FA] (2026)
		
		\bibitem{Naidu2019}
		Venku Naidu, D., Ramesh, G.: On absolutely norm attaining operators.
		Proc. Indian Acad. Sci. Math. Sci. \textbf{129}(4), Art. 54, 17 pp. (2019)
		
		\bibitem{Pandey2017}
		Pandey, S.K., Paulsen, V.I.: A spectral characterization of AN operators.
		J. Aust. Math. Soc. \textbf{102}, 369--391 (2017)
		
		\bibitem{Ramesh2018}
		Ramesh, G.: Absolutely norm attaining paranormal operators.
		J. Math. Anal. Appl. \textbf{465}, 547--556 (2018)
		
		\bibitem{Ramesh2021}
		Ramesh, G., Osaka, H.: On a subclass of norm attaining operators.
		Acta Sci. Math. (Szeged) \textbf{87}, 247--263 (2021)
		
		\bibitem{wuhu}
		Ramesh, G., Osaka, H.: On operators which attain their norm on every reducing subspace.
		Ann. Funct. Anal. \textbf{13}, 19 (2022). DOI: 10.1007/s43034-022-00167-8
		
		\bibitem{864849}
		Disintegrating By Parts: Complete ONS and pure point spectrum.
		Mathematics Stack Exchange, version 2014-07-12.
		\url{https://math.stackexchange.com/q/864849}
		
		\bibitem{ReedSimon}
		Reed, M., Simon, B.: Methods of Modern Mathematical Physics I:
		Functional Analysis.
		Revised and enlarged ed., Academic Press, New York (1980).
		
	\end{thebibliography}
\end{document}